\numberwithin{equation}{section}
\date{ }
\begin{document}
\newcommand\tcb{\textcolor{blue}}
\newcommand\tcr{\textcolor{red}}

\newcommand{\E}{\boldsymbol{E}}
\newcommand{\G}{\mathcal{G}_n}
\newtheorem{theorem}{Theorem}[section]
\newtheorem{lemma}[theorem]{Lemma}
\newtheorem{coro}[theorem]{Corollary}
\newtheorem{defn}[theorem]{Definition}
\newtheorem{assumption}[theorem]{Assumption}
\newtheorem{example}[theorem]{Example}
\newtheorem{prop}[theorem]{Proposition}
\newtheorem{remark}[theorem]{Remark}

\newcommand{\upcite}[1]{\textsuperscript{\textsuperscript{\cite{#1}}}}
\newcommand\tq{{\scriptstyle{3\over 4 }\scriptstyle}}
\newcommand\qua{{\scriptstyle{1\over 4 }\scriptstyle}}
\newcommand\hf{{\textstyle{1\over 2 }\displaystyle}}
\newcommand\athird{{\scriptstyle{1\over 3 }\scriptstyle}}
\newcommand\hhf{{\scriptstyle{1\over 2 }\scriptstyle}}

\newcommand{\eproof}{\indent\vrule height6pt width4pt depth1pt\hfil\par\medbreak}
\newcommand{\RR}{\mathbb{R}}
\makeatletter \@addtoreset{equation}{section}
\renewcommand{\thesection}{\arabic{section}}
\renewcommand{\theequation}{\thesection.\arabic{equation}}

\def\a{\alpha} \def\g{\gamma}
\def\e{\varepsilon} \def\z{\zeta} \def\y{\eta} \def\o{\theta}
\def\vo{\vartheta} \def\k{\kappa} \def\l{\lambda} \def\m{\mu} \def\n{\nu}
\def\x{\xi}  \def\r{\rho} \def\s{\sigma}
\def\p{\varphi} \def\f{\varphi}   \def\w{\omega}
\def\q{\surd} \def\i{\bot} \def\h{\forall} \def\j{\emptyset}

\def\be{\beta} \def\de{\delta} \def\up{\upsilon} \def\eq{\equiv}
\def\ve{\vee} \def\we{\wedge}

\def\F{{\cal F}}
\def\T{\tau}   \def\D{\Delta} \def\O{\Theta} \def\L{\Lambda}
\def\X{\Xi} \def\S{\Sigma} \def\W{\Omega}
\def\M{\partial} \def\N{\nabla} \def\Ex{\exists} \def\K{\times}
\def\V{\bigvee} \def\U{\bigwedge}

\def\1{\oslash} \def\2{\oplus} \def\3{\otimes} \def\4{\ominus}
\def\5{\circ} \def\6{\odot} \def\7{\backslash} \def\8{\infty}
\def\9{\bigcap} \def\0{\bigcup} \def\+{\pm} \def\-{\mp}
\def\[{\langle} \def\]{\rangle}

\def\proof{\noindent{\it Proof. }}
\def\tl{\tilde}
\def\trace{\hbox{\rm trace}}
\def\diag{\hbox{\rm diag}}
\def\for{\quad\hbox{for }}
\def\refer{\hangindent=0.3in\hangafter=1}

\newcommand\wD{\widehat{\D}}

\thispagestyle{empty}
\title{ \bf  An explicit Euler method for McKean–Vlasov SDEs  driven by fractional Brownian motion}
%\title{ \bf An adaptive Milstein method for stochastic differential equations with commutative noise: convergence, boundedness and stability}
\author{

Jie He\textsuperscript{a},\quad
Shuaibin Gao\textsuperscript{a},\quad
Weijun Zhan\textsuperscript{b},\quad
Qian Guo\textsuperscript{a}\thanks{Corresponding author, Email: qguo@shnu.edu.cn}\quad
\\
\textsuperscript{a}
Department of Mathematics, Shanghai Normal University, Shanghai 200234, China\\
\textsuperscript{b}Department of Mathematics, Anhui Normal University, Wuhu 241000, China\\
}
\maketitle

\begin{abstract}
In this paper,  we establish the theory of chaos propagation and propose an Euler-Maruyama scheme for McKean–Vlasov stochastic differential equations driven by fractional Brownian motion with Hurst exponent $H\in (0,1)$. Meanwhile, upper bounds for errors in the Euler method is obtained. A numerical example is demonstrated to verify the theoretical results.

\medskip \noindent
{\small\bf Keywords: Propagation of chaos, Explicit Euler method, McKean–Vlasov, Fractional Brownian motion, Interacting particle system.}
%{\small\bf 2010 MSC:}  60H35,65C30
\end{abstract}

\section{Introduction}

%McKean–Vlasov stochastic differential equation (MVSDE), whose coefficients depend on the distribution of the state variable, was  discussed to study the Boltzmann equation in \cite{kac1956}.
The pioneering work of McKean–Vlasov stochastic differential equations (SDEs) has been done by McKean in  \cite{mckean1966,mckean1967,mckean1975} connected with a mathematical foundation of the Boltzmann equation. Due to their widespread applications in many fields, McKean–Vlasov SDEs have been researched by many scholars. In \cite{huang2019}, existence and uniqueness are proved for distribution-dependent SDEs with non-degenerate noise under integrability conditions on distribution-dependent coefficients.  Some theories about McKean–Vlasov SDEs were investigated, including %the Feynman-Kac Formula \cite{crisan2018},
ergodicity \cite{eberle2019}, Harnack inequality \cite{wang2018}, and the Bismut formula \cite{fan2022,ren2019}. And the integration by parts formulae on Wiener space for solutions of the SDEs with general McKean–Vlasov interaction was derived in \cite{crisan2018}. In \cite{buckdahn2017}, Buckdahn et al. characterized the function on the coefficients of the stochastic differential equation under appropriate regularity conditions as the unique classical solution of a nonlocal partial differential equation of mean-field type.  A complete probabilistic analysis of a large class of stochastic differential games with mean field interactions was provided in \cite{carmona2013}.  %When the coefficients of McKean–Vlasov SDEs are Dini continuous in the space variable, gradient estimates, and Harnack type inequalities were derived in \cite{huang2019}. %\cite{huang2021}.

It is well known that the explicit solutions to McKean–Vlasov SDEs are difficult to be shown. Hence, the numerical methods for McKean–Vlasov SDEs driven by standard Brownian motion are studied by many scholars \cite{antonelli2002,bao2021,bossy1996,bossy1997,dos2022}. Moreover, it should be noted that SDEs driven by fractional Brownian motion (fBm) have wider applications \cite{biagini2008,mishura2008,norros1995}. On the other hand, the numerical methods for SDEs driven by fBm have attracted increasing interest; see \cite{hong2020,hu2021,li2021,wang2022,zhang2021} for example. Galeati et al. \cite{galeati2022} examined the distribution-dependent stochastic differential equations with erratic, potentially distributional drift, driven by an additive fBm of Hurst parameter $H\in (0, 1)$, and they established strong well-posedness under a variety of assumptions on the drift. To our knowledge, the numerical method for McKean–Vlasov SDEs driven by fBm has not been discussed yet.
As we know, propagation of chaos plays a key role to approximate the Mckean-Vlasov SDEs.
%By borrowing the result in \cite{fournier2015},
This paper aims at establishing the theory about propagation of chaos and the strong convergence rate in $L^{p}$ sense of EM method for  McKean–Vlasov SDEs  driven by fBm under the globally Lipschitz condition.

In this paper, we consider the following $d$-dimensional  McKean–Vlasov SDEs driven by fBm of the form
\begin{equation}\label{McKean}
\mathrm{d}X_{t}=b\left(X_{t}, \mathcal{L}\left(X_{t}\right)\right) \mathrm{d}t+ \sigma\left(\mathcal{L}\left(X_{t}\right)\right) \mathrm{d} B^{H}_{t},
\end{equation}
where the coefficients $b:\mathbb{R}^{d} \times \mathcal{P}_{\theta}\left(\mathbb{R}^{d}\right) \rightarrow \mathbb{R}^{d}, \sigma:\mathcal{P}_{\theta}\left(\mathbb{R}^{d}\right) \rightarrow \mathbb{R}^{d}\otimes\mathbb{R}^{d}$. Here, the initial value $X_{0} \in L^{p}\left(\Omega \rightarrow \mathbb{R}^{d}, \mathcal{F}_{0}, \mathbb{P}\right)$ with $p \geq \theta\geq 2$,
 and  $B^{H}_{ t}$ is a $d$-dimensional fBm with Hurst parameter $H\in (0,1)$. As we know, the covariance of $B^{H}_{t}$ is $$R_{H}\left(t,s\right)=\boldsymbol{E}\left(B^{H}_{t}B^{H}_{s}\right)=\frac{1}{2}\left(t^{2H}+s^{2H}-|t-s|^{2H}\right), \quad\forall t, s \in[0,T].$$ The fBm $\{B^{H}_{t}\}_{t\geq0}$  corresponds to a standard Brownian motion when $H = 1/2$. Further, the fBm $\{B^{H}_{t}\}_{t\geq0}$ is not a semi-martingale or a Markov process unless $H = 1/2$.  Therefore, when working with the fBm $\{B^{H}_{t}\}_{t\geq0}$, many of the powerful features are unavailable.  %As a solution to this problem, we consider the solution $X_{t}$
We rewrite \eqref{McKean} to the system of noninteracting particles
\begin{equation}\label{McKeanN}
X_{t}^{i}=X_{0}^{i}+\int_{0}^{t}b\left(X_{s}^{i}, \mathcal{L}_{X_{s}^{i}}\right) \mathrm{d} s+\int_{0}^{t} \sigma\left( \mathcal{L}_{X_{s}^{i}}\right) \mathrm{d} B^{H,i}_{s},\quad i=1,\ldots,N,
\end{equation}
 where $\mathcal{L}_{X_{t}^{i}}$ denotes the law of the process $X^{i}$ at time $t$.
Compared with the standard SDEs, McKean–Vlasov SDEs provide an additional complexity, that is, it is required to approximate the law at each time step. Although there are other technologies, the most common one is the so-called interacting particle system
\begin{equation}\label{McKeanI}
X_{t}^{i,N}=X_{0}^{i}+\int_{0}^{t}b\left(X_{s}^{i,N}, \mu_{s}^{X,N}\right) \mathrm{d} s+\int_{0}^{t} \sigma\left( \mu_{s}^{X,N}\right) \mathrm{d} B^{H,i}_{s},
\end{equation}
where the empirical measures is defined by
$$
\mu_{t}^{X,N}(\cdot):=\frac{1}{N} \sum_{j=1}^{N} \delta_{X_{t}^{j,N}}(\cdot),
$$
and $\delta_{x}$ denotes the Dirac measure at point $x$.

The structure of this work is as follows. The mathematical preliminaries on the McKean–Vlasov SDEs driven by fBm are presented in Section \ref{part2}. Section \ref{part3} gives the main theorem and its proof. Numerical simulations are provided in Section \ref{part4}.

\section{Mathematical Preliminaries}\label{part2}

Throughout the article, we will always work on a finite time interval $[0, T ]$ and consider an underlying probability space $(	\Omega, \mathcal{F}, \mathbb{P})$. More precisely, $\Omega$ is the Banach space of continuous functions vanishing at 0 equipped with the supremum norm, $\mathcal{F}$ is the Borel $\sigma$-algebra and $\mathbb{P}$ is the unique probability measure on $\Omega$ such that the canonical process $\{B ^{H}_{t} \}_{t \in[0, T]}$ is a $d$-dimensional fBm with Hurst parameter $H\in (0,1)$. For any $p \geq 1$, let %$L^{\theta}(0,T)$ be the space of $\mathbb{R}^{d}$-valued, continuous $\{\mathcal{F}_{t}\}_{t \in[0, T]}$-adapted processes $V$ on $[0, T]$ satisfying
$$
\|V(x)\|_{L^{p}}:=\left(\boldsymbol{E} \left|V(x)\right|^{p}\right)^{1 / p}.
$$
We use $| \cdot |$ and $\langle\cdot, \cdot\rangle$
for the Euclidean norm and inner product, respectively, %and for a matrix, we denote by $\|\cdot\|$ the
%operator norm  $\|\cdot\|_{L^{2}_{\mu}}$ denotes for the norm of the space $L^{2}_{\mu}(\mathbb{R}^{d}, \mu)$,
 and let $a\wedge b :=\min(a, b)$ and $a\vee b :=\max(a, b)$.  The notation $u\otimes v$ for $u\in \mathbb{R}^d$ and $v\in \mathbb{R}^d$ means the tensor product of $u$ and $v$. We will denote the set of all probability measures on $\mathbb{R}^{d}$ by
\begin{equation*}
 \mathcal{P}_{p}\left(\mathbb{R}^{d}\right):=\left\{\mu \in \mathcal{P}\left(\mathbb{R}^{d}\right): \int_{\mathbb{R}^{d}}|x|^{p} \mu(\mathrm{d} x)<\infty\right\}
\end{equation*}
For $\theta\ge 2$ and any $\mu, \nu\in\mathcal{P}_{\theta}(\mathbb{R}^{d})$, the $\theta$-Wasserstein distance is defined by,
 \begin{equation*}
 \mathcal{W}_{\theta}(\mu, \nu):=\left(\inf _{\pi \in \Pi(\mu, \nu)} \int_{\mathbb{R}^{d} \times \mathbb{R}^{d}}|x-y|^{\theta} \pi(\mathrm{d} x, \mathrm{d} y)\right)^{1/\theta}
 \end{equation*}
where $\Pi(\mu, \nu)$ is the set of couplings of $\mu$ and $\nu$, and $\mathcal{P}_{\theta}\left(\mathbb{R}^{d}\right)$ is a Polish space under the $\theta$-Wasserstein metric.

Let $a, b \in \mathbb{R}$ with $a<b$. For $g \in L^{1}(a, b)$ and $\alpha>0$, the left-sided fractional Riemann-Liouville integral of $g$ of order $\alpha$ on $[a, b]$ is defined as
$$
I_{a+}^{\alpha} g(x)=\frac{1}{\Gamma(\alpha)} \int_{a}^{x}(x-y)^{\alpha-1} g(y) \mathrm{d} y,
$$
and the right-sided fractional Riemann-Liouville integral of $f$ of order $\alpha$ on $[a, b]$ is defined as
$$
 I_{b-}^{\alpha} g(x)=\frac{(-1)^{-\alpha}}{\Gamma(\alpha)} \int_{x}^{b}(y-x)^{\alpha-1}  g(y)\mathrm{d} y,
$$
where $x \in(a, b)$ %a.e., $(-1)^{-\alpha}=\mathrm{e}^{-i \alpha \pi}$
 and $\Gamma$ denotes the Gamma function. For further details about fractional integral and derivative, we refer the reader to \cite{biagini2008,samko1993}.

\begin{assumption}\label{assp}
There exists a positive constant $L$ such that
\begin{equation}
|b(x,\mu)-b(y,\nu)|\leq L\left(|x-y|+\mathcal{W}_{\theta}(\mu, \nu)\right),
\end{equation}
 for all $x,y \in \mathbb{R}^{d}$ and $\mu,\nu \in\mathcal{P}_{\theta}(\mathbb{R}^{d})$.
\begin{equation}
|\sigma(\mu)-\sigma(\nu)|\leq L\mathcal{W}_{\theta}(\mu, \nu),
\end{equation}
 for all $\mu,\nu \in\mathcal{P}_{\theta}(\mathbb{R}^{d})$. And for initial experience distribution $\delta_{0}$,
\begin{equation}
|b(0,\delta_{0})|\vee|\sigma(\delta_{0}))|\leq L.
\end{equation}
\end{assumption}
Furthermore, from Assumption \ref{assp}, there exists  a positive constant $L$ such that
\begin{equation}\label{tui1}
|b(x,\mu)|\leq L\left(1+|x|+\mathcal{W}_{\theta}(\mu,\delta_{0} )\right),
\end{equation}
and
\begin{equation}\label{tui2}
|\sigma(\mu)|\leq L\left(1+\mathcal{W}_{\theta}(\mu,\delta_{0})\right).
\end{equation}

\section{Main Result}\label{part3}

\begin{lemma}\label{existence and uniqueness}\cite{fan2022}
When $H\in(1/2,1)$ and $p>1/H$ hold, the solution of $X_{t}$ in \eqref{McKean} exists and is unique. Moreover, when  $H\in(0,1/2)$ and $\sigma(\mu)$ is independent of distribution,  the solution of $X_{t}$ in \eqref{McKean} exists and is unique.
\end{lemma}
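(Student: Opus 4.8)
The plan is to construct the solution by a Picard–fixed-point argument carried out on the flow of marginal laws, exploiting the structural feature that in \eqref{McKean} the coefficient $\sigma$ depends on the measure argument alone and not on the state. Once a candidate flow $\mu=(\mu_t)_{t\in[0,T]}\subset\mathcal{P}_{\theta}(\mathbb{R}^{d})$ is frozen, the integrand $s\mapsto\sigma(\mu_s)$ becomes deterministic, so that $\int_0^t\sigma(\mu_s)\,\mathrm{d}B^{H}_s$ is a Gaussian Wiener-type integral against the fBm rather than a genuine stochastic integral. This is the device that sidesteps the main difficulty, namely that $B^{H}$ is neither a semimartingale nor Markovian.

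First I would fix $\mu\in C([0,T];\mathcal{P}_{\theta}(\mathbb{R}^{d}))$ and study the frozen equation
\[
X^{\mu}_{t}=X_{0}+\int_{0}^{t}b\bigl(X^{\mu}_{s},\mu_{s}\bigr)\,\mathrm{d}s+\int_{0}^{t}\sigma(\mu_{s})\,\mathrm{d}B^{H}_{s},
\]
whose drift is Lipschitz in the state variable by Assumption \ref{assp} and whose noise term is an explicit centered Gaussian process. For $H\in(1/2,1)$ the Wiener integral with bounded deterministic integrand has variance $\int_0^t\!\int_0^t\sigma(\mu_s)\sigma(\mu_r)\,H(2H-1)|s-r|^{2H-2}\,\mathrm{d}s\,\mathrm{d}r<\infty$, and Gaussian hypercontractivity (equivalence of $L^{p}$ and $L^{2}$ norms on a fixed Wiener chaos) controls all its $p$-th moments. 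A standard Picard iteration then yields a unique solution $X^{\mu}$, and the linear growth bounds \eqref{tui1}–\eqref{tui2} together with Gr\"onwall give the moment estimate $\sup_{t\le T}\E|X^{\mu}_{t}|^{p}<\infty$.

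Next I would define the map $\Phi(\mu):=(\mathcal{L}_{X^{\mu}_{t}})_{t\in[0,T]}$ and show it is a contraction in a suitable metric on $C([0,T];\mathcal{P}_{\theta}(\mathbb{R}^{d}))$. Writing the difference of the solutions associated with two flows $\mu,\nu$, the drift difference is controlled by $L(|X^{\mu}_{s}-X^{\nu}_{s}|+\mathcal{W}_{\theta}(\mu_{s},\nu_{s}))$ and the noise difference by the Gaussian estimate $\bigl\|\int_{0}^{t}[\sigma(\mu_{s})-\sigma(\nu_{s})]\,\mathrm{d}B^{H}_{s}\bigr\|_{L^{p}}\le C_{p}L\,(\int_0^t\!\int_0^t\mathcal{W}_\theta(\mu_s,\nu_s)\mathcal{W}_\theta(\mu_r,\nu_r)|s-r|^{2H-2}\,\mathrm{d}s\,\mathrm{d}r)^{1/2}$. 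Using $\mathcal{W}_{\theta}(\mathcal{L}_{X^{\mu}_{t}},\mathcal{L}_{X^{\nu}_{t}})^{\theta}\le\E|X^{\mu}_{t}-X^{\nu}_{t}|^{\theta}$ and a Gr\"onwall argument in a weighted supremum norm (or by iterating over a partition of $[0,T]$ on which the contraction constant is less than one), $\Phi$ admits a unique fixed point, which is precisely the law of the solution to \eqref{McKean}. For the regime $H\in(0,1/2)$ with $\sigma$ independent of the distribution, the noise is additive, $\int_{0}^{t}\sigma\,\mathrm{d}B^{H}_{s}=\sigma B^{H}_{t}$, which is well defined for every $H$; substituting $Y_{t}=X_{t}-\sigma B^{H}_{t}$ turns \eqref{McKean} into a random ODE with Lipschitz forcing that is solved pathwise by the same fixed-point scheme, requiring no rough-path machinery.

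The main obstacle is the control of the fractional noise term. For $H\in(1/2,1)$, even with deterministic integrand one must bound the $L^{p}$ moments of the fBm integral and of its increments uniformly; this is exactly where the hypothesis $p>1/H$ enters. The increments of $\int_{0}^{t}\sigma(\mu_{s})\,\mathrm{d}B^{H}_{s}$ have variance of order $|t-s|^{2H}$, so $pH>1$ is the Kolmogorov condition guaranteeing a H\"older-continuous modification of the solution and closing the moment and contraction estimates in $L^{p}$. I expect the interplay between this H\"older regularity and the Wasserstein contraction to be the most delicate point, whereas the $H<1/2$ case is comparatively routine once the additive-noise reduction is observed; the restriction to constant $\sigma$ there is forced by the fact that a measure-dependent $\sigma$ would destroy the Wiener-integral structure and necessitate a rough-path interpretation of the integral.
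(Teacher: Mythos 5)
The paper does not actually prove this lemma: it is imported verbatim from \cite{fan2022}, and no argument is given in the text. Your sketch is, in substance, the proof strategy of that cited reference, so you have reconstructed the right route rather than a different one: freeze a measure flow $\mu$, observe that $s\mapsto\sigma(\mu_s)$ is then deterministic so the noise term is a Gaussian Wiener integral against $B^{H}$ (circumventing the non-semimartingale issue), solve the frozen equation by Picard iteration in the state variable, and run a second fixed point for $\Phi(\mu)=(\mathcal{L}_{X^{\mu}_{t}})_{t}$ in $C([0,T];\mathcal{P}_{\theta})$ using $\mathcal{W}_{\theta}(\mathcal{L}_{X^{\mu}_{t}},\mathcal{L}_{X^{\nu}_{t}})^{\theta}\le\boldsymbol{E}|X^{\mu}_{t}-X^{\nu}_{t}|^{\theta}$ and a Gr\"onwall or small-interval iteration. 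Two points in your justification are imprecise, though neither breaks the argument. First, your explanation of why $\sigma$ must be distribution-free when $H<1/2$ is off: the integrand $\sigma(\mu_s)$ would still be deterministic, so no rough-path interpretation is needed; the real obstruction is that for $H<1/2$ the Wiener-integral theory requires more regularity of the integrand than mere continuity of $s\mapsto\mu_s$ in $\mathcal{W}_{\theta}$, and the moment inequality $\boldsymbol{E}|\int_0^t f\,\mathrm{d}B^{H}|^{p}\le C(\int_0^t|f|^{1/H}\,\mathrm{d}s)^{pH}$ of M\'emin--Mishura--Valkeila, which closes the contraction estimate for the diffusion difference, holds in this direction only for $H>1/2$. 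Second, attributing $p>1/H$ solely to Kolmogorov continuity is a guess; in the cited framework it is the condition under which the $L^{p}$ estimates of the fractional integral term close (and note that for $H>1/2$ it is anyway implied by the paper's standing assumption $p\ge 2$). With those caveats your outline is a faithful reconstruction of the omitted proof.
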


\begin{lemma}\label{lem1}
For two  empirical measures
$\mu_{t}^{N}=\frac{1}{N}\sum_{j=1}^N\delta_{Z^{1,j,N}_{t}}$ and $\nu_{t}^{N}=\frac{1}{N}\sum_{j=1}^N \delta_{Z^{2,j,N}_{t}}$. Then,
\begin{equation*}
\boldsymbol{E}\left(\mathcal{W}_\theta^\theta(\mu_{t}^{N}, \nu_{t}^{N})\right)
\le \boldsymbol{E}\left(\frac{1}{N}\sum_{j=1}^N \left|Z_{t}^{1,j,N}-Z_{t}^{2,j, N}\right|^\theta\right).
\end{equation*}
\end{lemma}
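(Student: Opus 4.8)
The plan is to exploit the variational definition of the $\theta$-Wasserstein distance as an infimum over couplings, and to bound that infimum from above by evaluating the transport cost at one conveniently chosen coupling. Since the definition gives $\mathcal{W}_\theta^\theta(\mu_t^N,\nu_t^N)=\inf_{\pi\in\Pi(\mu_t^N,\nu_t^N)}\int_{\mathbb{R}^d\times\mathbb{R}^d}|x-y|^\theta\,\pi(\mathrm{d}x,\mathrm{d}y)$, it suffices to exhibit a single admissible coupling and read off its cost before taking expectations.

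First I would fix a realisation and work pathwise, treating $\mu_t^N$ and $\nu_t^N$ as deterministic empirical measures on $\mathbb{R}^d$. The natural candidate is the \emph{synchronous} (index-by-index) coupling
$$\pi:=\frac1N\sum_{j=1}^N\delta_{(Z_t^{1,j,N},\,Z_t^{2,j,N})},$$
which pairs the two particle systems through their common index $j$. The next step is to verify that $\pi\in\Pi(\mu_t^N,\nu_t^N)$. Projecting $\pi$ onto the first coordinate collapses each $\delta_{(Z_t^{1,j,N},Z_t^{2,j,N})}$ to $\delta_{Z_t^{1,j,N}}$ and returns $\frac1N\sum_{j=1}^N\delta_{Z_t^{1,j,N}}=\mu_t^N$; projecting onto the second coordinate returns $\nu_t^N$ by the same reasoning. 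Thus $\pi$ is admissible.

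With admissibility in hand, the infimum is dominated by the cost at $\pi$, giving the pathwise inequality
$$\mathcal{W}_\theta^\theta(\mu_t^N,\nu_t^N)\le\int_{\mathbb{R}^d\times\mathbb{R}^d}|x-y|^\theta\,\pi(\mathrm{d}x,\mathrm{d}y)=\frac1N\sum_{j=1}^N\bigl|Z_t^{1,j,N}-Z_t^{2,j,N}\bigr|^\theta.$$
Taking expectations of both sides and using monotonicity of $\boldsymbol{E}$ then yields the claimed bound.

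There is no genuine analytic obstacle here; this is the standard synchronous-coupling bound for empirical measures sharing an index set, and the whole argument is pathwise followed by one expectation. The only point demanding a line of care is confirming that the marginals of $\pi$ are recovered exactly even when several particle positions coincide: in that case the corresponding atoms simply merge with larger mass, but the coordinate projections still produce $\mu_t^N$ and $\nu_t^N$, so admissibility is unaffected.
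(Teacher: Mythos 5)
Your proposal is correct and is essentially the same argument the paper gives: the paper's one-line proof constructs exactly the same synchronous transport plan $\pi(\mathrm{d}x,\mathrm{d}y)=\frac{1}{N}\sum_{j=1}^N \delta_{Z^{1,j,N}}(\mathrm{d}x)\otimes \delta_{Z^{2,j,N}}(\mathrm{d}y)$, which coincides with your $\frac{1}{N}\sum_{j=1}^N\delta_{(Z_t^{1,j,N},Z_t^{2,j,N})}$. You merely spell out the marginal verification and the pathwise-then-expectation step that the paper leaves implicit.
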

This lemma follows from constructing a simple transport plan $\pi(dx, dy)=\frac{1}{N}\sum_{j=1}^N \delta_{Z^{1,j,N}}(dx)\otimes \delta_{Z^{2,j,N}}(dy)$.

\subsection{Case $H>1/2$}

\begin{theorem}\label{Xbound}
Let Assumption \ref{assp} holds and $q> p\geq2$, then
\begin{equation*}
  \sup_{t\in[0,T]}\E\left(|X^{i}_{t}|^{q}\right)+\sup_{t\in[0,T]}\E\left(|X^{i,N}_{t}|^{q}\right)\leq C_{q,H,T,L}\left(1+\boldsymbol{E}|X^{i}_{0}|^{q}\right),
\end{equation*}
where $C_{q,H,T,L}$ is a positive constant dependent on $q,H,T,L$.
\end{theorem}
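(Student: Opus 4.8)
The plan is to treat the true solution $X^i_t$ from \eqref{McKeanN} and the particle system $X^{i,N}_t$ from \eqref{McKeanI} in parallel, since both solve an equation of the same shape whose drift obeys the linear growth bound \eqref{tui1} and whose diffusion obeys \eqref{tui2}. For fixed $t$ I would apply the elementary inequality $|a+b+c|^q\le 3^{q-1}(|a|^q+|b|^q+|c|^q)$ to the three summands (initial datum, drift integral, fBm integral), take expectations, and assemble an integral inequality for $\phi(t):=\sup_{r\le t}\E|X^i_r|^q$ (resp.\ its particle-system analogue), to which Gronwall's inequality is applied at the end. The one genuinely new ingredient, compared with the classical Brownian case, is a moment bound for the fractional stochastic integral valid for $H>\hf$; I would use an estimate of the form
\begin{equation*}
\E\left|\int_0^t f_s\,\mathrm{d}B^{H,i}_s\right|^q\le C_{q,H}\,t^{qH-1}\int_0^t \E|f_s|^q\,\mathrm{d}s,
\end{equation*}
whose scaling is consistent with $\E|B^{H}_t|^q\asymp t^{qH}$, and which holds for deterministic as well as adapted integrands when $H>\hf$.

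For the drift, Hölder's inequality in time gives $\E\left|\int_0^t b\,\mathrm{d}s\right|^q\le t^{q-1}\int_0^t\E|b|^q\,\mathrm{d}s$. Invoking \eqref{tui1} and using that, for the true equation, $\mathcal{W}_{\theta}(\mathcal{L}_{X^i_s},\delta_0)=\left(\E|X^i_s|^{\theta}\right)^{1/\theta}\le\left(\E|X^i_s|^{q}\right)^{1/q}$ (since $\theta\le p<q$), the drift contribution is controlled by $C\int_0^t\left(1+\E|X^i_s|^q\right)\mathrm{d}s$. The diffusion term is handled by the displayed fBm estimate together with \eqref{tui2}, again bounding $\mathcal{W}_{\theta}(\mathcal{L}_{X^i_s},\delta_0)$ by moments, which yields a contribution of order $C_{q,H}\,t^{qH-1}\int_0^t\left(1+\E|X^i_s|^q\right)\mathrm{d}s$.

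For the particle system the only change is that $\mu^{X,N}_s$ is a random empirical measure, so $\mathcal{W}_{\theta}(\mu^{X,N}_s,\delta_0)^{\theta}=\frac1N\sum_{j=1}^N|X^{j,N}_s|^{\theta}$, which is exactly the transport plan underlying Lemma \ref{lem1}. Since $q/\theta\ge 1$, the power-mean inequality gives $\mathcal{W}_{\theta}(\mu^{X,N}_s,\delta_0)^{q}\le\frac1N\sum_{j=1}^N|X^{j,N}_s|^{q}$; taking expectations and using the exchangeability of the particles (for fixed $N$ all $X^{j,N}$ share the same law) replaces the right-hand side by $\E|X^{i,N}_s|^q$, so the particle-system estimate closes into the same integral inequality as the true equation, with constants independent of $N$.

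Combining the three bounds, both $\phi(t)$ and its particle analogue satisfy $\phi(t)\le C_{q,H,T,L}\left(1+\E|X^i_0|^q\right)+C_{q,H,T,L}\int_0^t\phi(s)\,\mathrm{d}s$ on $[0,T]$, and Gronwall's inequality delivers the claimed bound; a preliminary stopping-time truncation justifies the finiteness of $\phi(t)$ before Gronwall is invoked. I expect the main obstacle to be the fractional-integral moment estimate itself: proving it rigorously rests on the $H>\hf$ stochastic integration theory and a Hardy--Littlewood--Sobolev (or Garsia--Rodemich--Rumsey) argument controlling the singular kernel $|u-v|^{2H-2}$, and one must verify that the resulting constant $C_{q,H}$ is independent of $N$ so that the particle-system bound is genuinely uniform.
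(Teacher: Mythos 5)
Your proposal follows essentially the same route as the paper's proof: the same three-term decomposition via $|a+b+c|^q\le 3^{q-1}(\cdots)$, H\"older plus the linear-growth bounds \eqref{tui1}--\eqref{tui2} for the drift and diffusion, the M\'emin--Mishura--Valkeila moment inequality (in exactly the H\"older-processed form you display) for the fBm integral, the identity $\mathcal{W}_\theta^\theta(\mu^{X,N}_s,\delta_0)=\frac1N\sum_{j}|X^{j,N}_s|^\theta$ combined with the identical distribution of the particles to close the estimate into $\E|X^{i,N}_s|^q$, and Gr\"onwall at the end. The only cosmetic difference is that you pass from the $\theta$-th to the $q$-th power by the power-mean (Jensen) inequality where the paper uses Minkowski in $L^{q/\theta}$; both give the same bound.
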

\begin{proof}
From \eqref{McKeanI} and elementary inequality, we get
\begin{equation}\label{J}
\begin{split}
\E\left(|X^{i,N}_{t}|^{q}\right)\leq& 3^{q-1}\boldsymbol{E}\left(|X^{i}_{0}|^{q}\right)+3^{q-1}\E\left(\left|\int_{0}^{t}b\left(X^{i,N}_{s},\mu_{s}^{X,N}\right)\mathrm{d}s\right|^{q}\right)
+3^{q-1}\boldsymbol{E}\left(\left|\int_{0}^{t}\sigma\left(\mu_{s}^{X,N}\right)\mathrm{d}B^{H,i}_{s}\right|^{q}\right).
%\\:=&3^{q-1}\E\left(|X^{i}_{0}|^{q}\right)+3^{q-1}J_{1}+3^{q-1}J_{2}.
\end{split}
\end{equation}
For the second part at the right side of \eqref{J}, using the H\"{o}lder inequality, we obtain
\begin{equation*}
\begin{split}
\boldsymbol{E}\left(\left|\int_{0}^{t}b\left(X^{i,N}_{s},\mu_{s}^{X,N}\right)\mathrm{d}s\right|^{q}\right)
\leq T^{q-1}\boldsymbol{E}\left(\int_{0}^{t}\left|b\left(X^{i,N}_{s},\mu_{s}^{X,N}\right)\right|^{q}\mathrm{d}s\right).
\end{split}
\end{equation*}
Applying \eqref{tui1}, we see that
\begin{equation}\label{J1}
\begin{split}
\boldsymbol{E}\left(\left|\int_{0}^{t}b\left(X^{i,N}_{s},\mu_{s}^{X,N}\right)\mathrm{d}s\right|^{q}\right)
\leq& T^{q-1}\boldsymbol{E}\left(\int_{0}^{t}L^{q}\left(1+|X^{i,N}_{s}|+\mathcal{W}_{\theta}(\mu_{s}^{X,N},\delta_{0} )\right)^{q}\mathrm{d}s\right)
\\\leq&C_{T,q,L}+C_{T,q,L}\int_{0}^{t}\boldsymbol{E}\left(|X^{i,N}_{s}|^{q}\right)\mathrm{d}s
+C_{T,q,L}\int_{0}^{t}\boldsymbol{E}\left(\mathcal{W}_{\theta}^{q}(\mu_{s}^{X,N},\delta_{0} )\right)\mathrm{d}s.
\end{split}
\end{equation}
For the last part of the right end of \eqref{J}, apply Theorem 1.1 in \cite{memin2001}, we get
\begin{equation}\label{zhongdian}
%\begin{split}
\boldsymbol{E}\left(\left|\int_{0}^{t}\sigma\left(\mu_{s}^{X,N}\right)dB^{H,i}_{s}\right|^{q}\right)
\leq C_{q,H}\left(\int_{0}^{t}\left|\sigma\left(\mu_{s}^{X,N}\right)\right|^{1/H}\mathrm{~d}s\right)^{qH}.
%\end{split}
\end{equation}
Using the H\"{o}lder inequality,
\begin{equation*}
\boldsymbol{E}\left(\left|\int_{0}^{t}\sigma\left(\mu_{s}^{X,N}\right)dB^{H,i}_{s}\right|^{q}\right)
\leq C_{q,H,T}\int_{0}^{t} \left|\sigma\left(\mu_{s}^{X,N}\right)\right|^{q}\mathrm{~d}s.
\end{equation*}
By \eqref{tui2},  we have
\begin{equation}\label{J2}
\begin{split}
\boldsymbol{E}\left(\left|\int_{0}^{t}\sigma\left(\mu_{s}^{X,N}\right)dB^{H,i}_{s}\right|^{q}\right)
\leq &C_{q,H,T}\int_{0}^{t} L^{q}\boldsymbol{E}\left(1+\mathcal{W}_{\theta}(\mu_{s}^{X,N},\delta_{0} )\right)^{q}\mathrm{~d}s
\\\leq &C_{q,H,T,L}+C_{q,H,T,L}\int_{0}^{t}\boldsymbol{E}\left(\mathcal{W}_{\theta}^{q}(\mu_{s}^{X,N},\delta_{0} )\right)\mathrm{~d}s.
\end{split}
\end{equation}
Through a similar proof in  \cite[Lemma 2.3]{dos2019}, for $\theta\geq2$ one can observe that
\begin{equation}\label{delta}
\mathcal{W}_{\theta}^{\theta}(\mu_{s}^{X,N},\delta_{0} )=\frac{1}{N} \sum_{j=1}^{N}\left|X^{j,N}_{s}\right|^{\theta}.
\end{equation}
Combine \eqref{J1} and \eqref{J2} into \eqref{J} and \eqref{delta}, we get
\begin{equation*}
\begin{split}
\boldsymbol{E}\left(|X^{i,N}_{t}|^{q}\right)\leq& C_{q}\boldsymbol{E}\left(|X^{i}_{0}|^{q}\right)+C_{T,q,L}+C_{T,q,L}\int_{0}^{t}\boldsymbol{E}\left(|X^{i,N}_{s}|^{q}\right)\mathrm{~d}s
\\&+C_{T,q,L}\int_{0}^{t}\boldsymbol{E}\left(\mathcal{W}_{\theta}^{q}(\mu_{s}^{X,N},\delta_{0} )\right)\mathrm{~d}s+C_{q,H,T,L}+C_{q,H,T,L}\int_{0}^{t}\boldsymbol{E}\left(\mathcal{W}_{\theta}^{q}(\mu_{s}^{X,N},\delta_{0} )\right)\mathrm{~d}s
\\\leq& C_{q,H,T,L}\left(1+\boldsymbol{E}|X^{i}_{0}|^{q}\right)+C_{T,q,L}\int_{0}^{t}\boldsymbol{E}\left(|X^{i,N}_{s}|^{q}\right)\mathrm{~d}s
+C_{T,q,L}\boldsymbol{E}\left(\left(\frac{1}{N} \sum_{j=1}^{N}\left|X^{j,N}_{s}\right|^{\theta}\right)^{q/\theta}\right).
\end{split}
\end{equation*}
For the last term, we apply the Minkowski inequality and since all $j$ are identically distributed, we have
\begin{equation*}
\boldsymbol{E}\left(\left(\frac{1}{N} \sum_{j=1}^{N}\left|X^{j,N}_{s}\right|^{\theta}\right)^{q/\theta}\right)\leq \left(\frac{1}{N} \sum_{j=1}^{N}\left\||X^{j,N}_{s}|^{\theta}\right\|_{L^{q/\theta}}\right)^{q/\theta}=\left(\frac{1}{N} \sum_{j=1}^{N}\left(\boldsymbol{E}|X^{j,N}_{s}|^{q}\right)^{\theta/q}\right)^{q/\theta}=\boldsymbol{E}\left(|X^{i,N}_{s}|^{q}\right).
\end{equation*}
Thus, we get
\begin{equation*}
%\begin{split}
\boldsymbol{E}\left(|X^{i,N}_{t}|^{q}\right)\leq C_{q,H,T,L}\left(1+\boldsymbol{E}|X^{i}_{0}|^{q}\right)+C_{q,H,T,L}\int_{0}^{t}\boldsymbol{E}\left(|X^{i,N}_{s}|^{q}\right)\mathrm{~d}s
.
%\end{split}
\end{equation*}
 Then applying the Gr\"{o}nwall inequality, we obtain
 \begin{equation*}
\boldsymbol{E}\left(|X^{i,N}_{t}|^{q}\right)\leq C_{q,H,T,L}\left(1+\boldsymbol{E}|X^{i}_{0}|^{q}\right).
\end{equation*}
%as required.
 Similarly, we can show
 \begin{equation*}
\boldsymbol{E}\left(|X^{i}_{t}|^{q}\right)\leq C_{q,H,T,L}\left(1+\boldsymbol{E}|X^{i}_{0}|^{q}\right).
\end{equation*}
The proof is complete.\eproof
\end{proof}

\begin{theorem}
[Propagation of Chaos]\label{pc}
Let Assumption \ref{assp} be satisfied. If for some $p\in [\theta, q)$, then it holds that
\begin{equation*}
\begin{split}
\sup _{i \in\{1, \ldots, N]} \sup _{t \in[0, T]} \boldsymbol{E}\left|X_{t}^{i}-X_{t}^{i,N}\right|^{p} &\leq C_{p,T,H,L,\theta}\\&
\times
\left\{\begin{array}{ll}
N^{-1 / 2}+N^{-(q-p)/q}, & \text { if } p>d/2\text{ and }q\neq2p, \\
N^{-1 / 2} \log(1+N)+N^{-(q-p)/q}, & \text { if } p=d/2\text{ and }q\neq2p,\\
N^{-p / d}+N^{-(q-p)/q}, & \text { if }p\in[2,d/2)\text{ and }q\neq d/(d-p),
\end{array}\right.
\end{split}
\end{equation*}
where the constant $C_{p,T,H,L,\theta}> 0$ depends on $p$, $T$, $H$, $L$ and $\theta$ but does not depend on N.
\end{theorem}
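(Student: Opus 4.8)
The plan is to estimate the $p$-th moment of $X_t^i-X_t^{i,N}$ directly from the integral equations \eqref{McKeanN} and \eqref{McKeanI}, which share the same initial datum $X_0^i$ and the same driving noise $B^{H,i}$, so that
$$X_t^i - X_t^{i,N} = \int_0^t \left[ b(X_s^i, \mathcal{L}_{X_s^i}) - b(X_s^{i,N}, \mu_s^{X,N}) \right] \mathrm{d}s + \int_0^t \left[ \sigma(\mathcal{L}_{X_s^i}) - \sigma(\mu_s^{X,N}) \right] \mathrm{d}B_s^{H,i}.$$
After the elementary inequality splits this into a drift and a diffusion contribution, I would bound the drift by the H\"older inequality and the Lipschitz hypothesis of Assumption \ref{assp}, and the diffusion by the fractional moment estimate \eqref{zhongdian} (Theorem 1.1 of \cite{memin2001}) followed again by H\"older and the Lipschitz bound on $\sigma$, exactly as in the proof of Theorem \ref{Xbound}. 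Both contributions then collapse to $\int_0^t \boldsymbol{E}|X_s^i - X_s^{i,N}|^p \,\mathrm{d}s$ together with $\int_0^t \boldsymbol{E}\big[\mathcal{W}_\theta^p(\mathcal{L}_{X_s^i}, \mu_s^{X,N})\big]\,\mathrm{d}s$, so the whole problem is transferred to controlling the averaged Wasserstein term.

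The central step is the decomposition of that Wasserstein term. Introducing the empirical measure $\mu_s^{X} := \frac{1}{N}\sum_{j=1}^N \delta_{X_s^j}$ of the noninteracting particles and using the triangle inequality for $\mathcal{W}_\theta$,
$$\mathcal{W}_\theta^p(\mathcal{L}_{X_s^i}, \mu_s^{X,N}) \le 2^{p-1}\mathcal{W}_\theta^p(\mathcal{L}_{X_s^i}, \mu_s^{X}) + 2^{p-1}\mathcal{W}_\theta^p(\mu_s^{X}, \mu_s^{X,N}).$$
For the second term I would use $\mathcal{W}_\theta \le \mathcal{W}_p$ (valid since $\theta \le p$) together with the coupling of Lemma \ref{lem1}, which gives $\mathcal{W}_\theta^p(\mu_s^X, \mu_s^{X,N}) \le \frac{1}{N}\sum_{j=1}^N |X_s^j - X_s^{j,N}|^p$; taking expectations and exploiting that the pairs $(X^j, X^{j,N})$ are exchangeable yields $\boldsymbol{E}\,\mathcal{W}_\theta^p(\mu_s^X, \mu_s^{X,N}) \le \boldsymbol{E}|X_s^i - X_s^{i,N}|^p$, which feeds back into the Gr\"onwall loop.

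The first term $\boldsymbol{E}\,\mathcal{W}_\theta^p(\mathcal{L}_{X_s^i}, \mu_s^{X})$ is the genuine statistical error and is the source of the three regimes. By uniqueness (Lemma \ref{existence and uniqueness}) the processes $X^1,\ldots,X^N$ solving \eqref{McKeanN} are i.i.d.\ with common marginal law $\mathcal{L}_{X_s^i}$, and Theorem \ref{Xbound} guarantees $\sup_{s\in[0,T]}\boldsymbol{E}|X_s^i|^q < \infty$ with $q>p$; hence $\mu_s^X$ is the empirical measure of $N$ i.i.d.\ samples with finite $q$-th moment. Bounding $\mathcal{W}_\theta\le\mathcal{W}_p$ and invoking the quantitative Fournier--Guillin estimate for the Wasserstein distance between a law and the empirical measure of its i.i.d.\ samples then produces precisely the three cases $p>d/2$, $p=d/2$, $p\in[2,d/2)$, together with the correction term $N^{-(q-p)/q}$, uniformly in $s\in[0,T]$.

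Collecting the estimates gives
$$\boldsymbol{E}|X_t^i - X_t^{i,N}|^p \le C\int_0^t \boldsymbol{E}|X_s^i - X_s^{i,N}|^p\,\mathrm{d}s + C\,\Phi(N),$$
where $\Phi(N)$ is the claimed rate, and the Gr\"onwall inequality closes the argument after taking $\sup_{t\in[0,T]}$ and then $\sup_i$. The main obstacle lies in the diffusion term: since the fBm is not a semimartingale the Burkholder--Davis--Gundy inequality is unavailable, so one must instead rely on the Young-integral moment bound \eqref{zhongdian}, whose restriction to $H>1/2$ is exactly what confines this theorem to that range. Verifying that the constants produced there remain independent of $N$ and that the Lipschitz estimate for $\sigma$ composes correctly with \eqref{zhongdian} is the delicate point, whereas the Fournier--Guillin input and the Gr\"onwall closure are comparatively routine.
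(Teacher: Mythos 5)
Your proposal is correct and follows essentially the same route as the paper's proof: the same drift/diffusion splitting with the M\'emin--Mishura--Valkeila moment bound in place of BDG, the same decomposition of $\mathcal{W}_\theta^p(\mathcal{L}_{X_s^i},\mu_s^{X,N})$ through the auxiliary empirical measure $\mu_s^X$, the coupling bound of Lemma \ref{lem1} combined with exchangeability (the paper phrases this via Minkowski's inequality and identical distribution), the Fournier--Guillin estimate fed by the $q$-th moment bound of Theorem \ref{Xbound}, and a final Gr\"onwall argument. No substantive differences.
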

\begin{proof}
It follows from \eqref{McKeanN} and \eqref{McKeanI} that
\begin{equation*}
X_{t}^{i}-X_{t}^{i,N}=\int_{0}^{t}\left(b\left(X_{s}^{i}, \mathcal{L}_{X_{s}^{i}}\right)-b\left(X_{s}^{i,N}, \mu_{s}^{X,N}\right)\right) \mathrm{~d}s+\int_{0}^{t} \left(\sigma\left( \mathcal{L}_{X_{s}^{i}}\right)-\sigma\left( \mu_{s}^{X,N}\right) \right) \mathrm{~d} B^{H,i}_{s}.
\end{equation*}
Using the elementary inequality, we can show that
\begin{equation}\label{jjj}
\begin{split}
\boldsymbol{E}\left(\left|X_{t}^{i}-X_{t}^{i,N}\right|^{p}\right)\leq&2^{p-1}\boldsymbol{E}\left(\left|\int_{0}^{t}b\left(X_{s}^{i}, \mathcal{L}_{X_{s}^{i}}\right)-b\left(X_{s}^{i,N}, \mu_{s}^{X,N}\right)\mathrm{~d}s \right|^{p}\right)
\\&+2^{p-1}\boldsymbol{E}\left(\left|\int_{0}^{t}\sigma\left( \mathcal{L}_{X_{s}^{i}}\right)-\sigma\left( \mu_{s}^{X,N}\right) \mathrm{~d} B^{H,i}_{s}\right|^{p}\right).
%\\:=&2^{p-1}\mathcal{J}_{1}+2^{p-1}\mathcal{J}_{2}.
\end{split}
\end{equation}
For  the first part on the right of \eqref{jjj}, use the H\"{o}lder inequality and Assumption \ref{assp}, we get
\begin{equation}\label{JJ1}
\begin{split}
\boldsymbol{E}&\left(\left|\int_{0}^{t}b\left(X_{s}^{i}, \mathcal{L}_{X_{s}^{i}}\right)-b\left(X_{s}^{i,N}, \mu_{s}^{X,N}\right)\mathrm{~d}s \right|^{p}\right)
\\\leq&t^{p-1}\boldsymbol{E}\left(\int_{0}^{t}\left|b\left(X_{s}^{i}, \mathcal{L}_{X_{s}^{i}}\right)-b\left(X_{s}^{i,N}, \mu_{s}^{X,N}\right) \right|^{p}\mathrm{~d}s\right)
\\\leq&t^{p-1}\boldsymbol{E}\left(\int_{0}^{t}\left|L\left(\left|X_{s}^{i}-X_{s}^{i,N}\right|+ \mathcal{W}_{\theta}(\mathcal{L}_{X_{s}^{i}},\mu_{s}^{X,N} )\right)\right|^{p}\mathrm{~d}s\right)
\\\leq& C_{p,T,L}\boldsymbol{E}\left(\int_{0}^{t}\left|X_{s}^{i}-X_{s}^{i,N}\right|^{p}\mathrm{~d}s\right)
+C_{p,T,L}\boldsymbol{E}\left(\int_{0}^{t}\mathcal{W}_{\theta}^{p}(\mathcal{L}_{X_{s}^{i}},\mu_{s}^{X,N} )\mathrm{~d}s\right).
\end{split}
\end{equation}
For  the second part on the right of \eqref{jjj}, %we take $\xi\in(1-H,1-1/p)$.
 through the same technique as  \eqref{zhongdian}, we get
\begin{equation*}
\begin{split}
\boldsymbol{E}\left(\left|\int_{0}^{t}\sigma\left( \mathcal{L}_{X_{s}^{i}}\right)-\sigma\left( \mu_{s}^{X,N}\right) \mathrm{~d} B^{H,i}_{s}\right|^{p}\right)
\leq C_{p,T,H}\boldsymbol{E}\left(\left(\int_{0}^{t}|\sigma\left( \mathcal{L}_{X_{s}^{i}}\right)-\sigma\left( \mu_{s}^{X,N}\right)|^{1/H}\mathrm{~d}s\right)^{pH}\right).
\end{split}
\end{equation*}
Apply Assumption \ref{assp}, we obtain
\begin{equation}\label{BH}
\begin{split}
\boldsymbol{E}\left(\left|\int_{0}^{t}\sigma\left( \mathcal{L}_{X_{s}^{i}}\right)-\sigma\left( \mu_{s}^{X,N}\right) \mathrm{~d} B^{H,i}_{s}\right|^{p}\right)\leq C_{p,T,H,L}\int_{0}^{t}\boldsymbol{E}\left(\mathcal{W}_{\theta}^{p}(\mathcal{L}_{X_{s}^{i}},\mu_{s}^{X,N} )\right)\mathrm{~d}s.
\end{split}
\end{equation}
Combining this with \eqref{JJ1}, then
\begin{equation*}
\begin{split}
\boldsymbol{E}\left(\left|X_{t}^{i}-X_{t}^{i,N}\right|^{p}\right)\leq&C_{p,T,L}\boldsymbol{E}\left(\int_{0}^{t}\left|X_{s}^{i}-X_{s}^{i,N}\right|^{p}\mathrm{~d}s\right)
+C_{p,T,L}\int_{0}^{t}\boldsymbol{E}\left(\mathcal{W}_{\theta}^{p}(\mathcal{L}_{X_{s}^{i}},\mu_{s}^{X,N} )\right)\mathrm{~d}s
\\&+C_{p,T,H,L}\int_{0}^{t}\boldsymbol{E}\left(\mathcal{W}_{\theta}^{p}(\mathcal{L}_{X_{s}^{i}},\mu_{s}^{X,N} )\right)\mathrm{~d}s
\\\leq&C_{p,T,L}\boldsymbol{E}\left(\int_{0}^{t}\left|X_{s}^{i}-X_{s}^{i,N}\right|^{p}\mathrm{~d}s\right)
+C_{p,T,H,L}\int_{0}^{t}\boldsymbol{E}\left(\mathcal{W}_{\theta}^{p}(\mathcal{L}_{X_{s}^{i}},\mu_{s}^{X,N} )\right)\mathrm{~d}s.
\end{split}
\end{equation*}
For the part of Wasserstein distance, we note $\mu_{t}^{X}:=\frac{1}{N} \sum_{j=1}^{N} \delta_{X_{t}^{j}}$ and we obtain
\begin{equation*}
\begin{split}
\mathcal{W}_{\theta}^{p}(\mathcal{L}_{X_{s}^{i}},\mu_{s}^{X,N} )=&\left(\mathcal{W}_{\theta}^{\theta}(\mathcal{L}_{X_{s}^{i}},\mu_{s}^{X,N} )\right)^{p/\theta}
\\\leq&\left(2^{\theta-1}\mathcal{W}_{\theta}^{\theta}(\mu_{s}^{X},\mu_{s}^{X,N} )+2^{\theta-1}\mathcal{W}_{\theta}^{\theta}(\mathcal{L}_{X_{s}^{i}},\mu_{s}^{X} )\right)^{p/\theta}
\\\leq&C_{p,\theta}\mathcal{W}_{\theta}^{p}(\mu_{s}^{X},\mu_{s}^{X,N} )+C_{p,\theta}\mathcal{W}_{\theta}^{p}(\mathcal{L}_{X_{s}^{i}},\mu_{s}^{X} ).
\end{split}
\end{equation*}
By Lemma \ref{lem1}, we see
\begin{equation*}
\begin{split}
\boldsymbol{E}\left(\mathcal{W}_{\theta}^{p}(\mathcal{L}_{X_{s}^{i}},\mu_{s}^{X,N})\right)
\leq&C_{p,\theta}\boldsymbol{E}\left(\left(\frac{1}{N}\sum_{j=1}^{N}\left|X_{t}^{j}-X_{t}^{j,N}\right|^{\theta}\right)^{p/\theta}\right)
+C_{p,\theta}\boldsymbol{E}\left(\mathcal{W}_{\theta}^{p}(\mathcal{L}_{X_{s}^{i}},\mu_{s}^{X} )\right).
\end{split}
\end{equation*}
Thought the fact that $\mathcal{W}_\theta(\mu, \nu)\le \mathcal{W}_p(\mu, \nu)$ for $\theta\leq p$, we have
\begin{equation*}
\begin{split}
\boldsymbol{E}&\left(\left|X_{t}^{i}-X_{t}^{i,N}\right|^{p}\right)\\\leq&C_{p,T,L}\boldsymbol{E}\left(\int_{0}^{t}\left|X_{s}^{i}
-X_{s}^{i,N}\right|^{p}\mathrm{~d}s\right)
+C_{p,T,H,L,\theta}\int_{0}^{t}\boldsymbol{E}\left(\left(\frac{1}{N}\sum_{j=1}^{N}\left|X_{t}^{j}-X_{t}^{j,N}\right|^{\theta}\right)^{p/\theta}
\right)\mathrm{~d}s
\\&+C_{p,T,H,L,\theta}\int_{0}^{t}\boldsymbol{E}\left(\mathcal{W}_{p}^{p}(\mathcal{L}_{X_{s}^{i}},\mu_{s}^{X} )\right)
\mathrm{~d}s
\\\leq&C_{p,T,L}\boldsymbol{E}\left(\int_{0}^{t}\left|X_{s}^{i}-X_{s}^{i,N}\right|^{p}\mathrm{~d}s\right)
+C_{p,T,H,L,\theta}\int_{0}^{t}\left(\frac{1}{N}\sum_{j=1}^{N}\left\|\left|X_{s}^{j}
-X_{s}^{j,N}\right|^{\theta}\right\|_{L^{p/\theta}}\right)^{p/\theta}\mathrm{~d}s
\\&+C_{p,T,H,L,\theta}\int_{0}^{t}\boldsymbol{E}\left(\mathcal{W}_{p}^{p}(\mathcal{L}_{X_{s}^{i}},\mu_{s}^{X} )\right)
\mathrm{~d}s,
\end{split}
\end{equation*}
where we use the Minkowski inequality in the last inequality. Then through simple sorting, we have%and observe that all $i$ are identically distributed
\begin{equation*}
\begin{split}
\boldsymbol{E}&\left(\left|X_{t}^{i}-X_{t}^{i,N}\right|^{p}\right)\\\leq&C_{p,T,L}\boldsymbol{E}\left(\int_{0}^{t}\left|X_{s}^{i}-X_{s}^{i,N}\right|^{p}\mathrm{~d}s\right)
+C_{p,T,H,L,\theta}\int_{0}^{t}\boldsymbol{E}\left(\left|X_{s}^{i}
-X_{s}^{i,N}\right|^{p}\right)\mathrm{~d}s
\\&+C_{p,T,H,L,\theta}\int_{0}^{t}\boldsymbol{E}\left(\mathcal{W}_{p}^{p}(\mathcal{L}_{X_{s}^{i}},\mu_{s}^{X} )\right)
\mathrm{~d}s
\\\leq&C_{p,T,H,L,\theta}\int_{0}^{t}\boldsymbol{E}\left(\left|X_{s}^{i}
-X_{s}^{i,N}\right|^{p}\right)\mathrm{~d}s
+C_{p,T,H,L,\theta}\int_{0}^{t}\boldsymbol{E}\left(\mathcal{W}_{p}^{p}(\mathcal{L}_{X_{s}^{i}},\mu_{s}^{X} )\right)
\mathrm{~d}s.
\end{split}
\end{equation*}
What's particularly interesting is that $\mathcal{W}_{p}^{p}(\mathcal{L}_{X_{s}^{i}},\mu_{s}^{X} )$ is controlled by the Wasserstein distance estimate  in \cite[Theorem 1]{fournier2015}. Therefore,
\begin{equation*}
\begin{split}
\boldsymbol{E}\left(\mathcal{W}_{p}^{p}(\mathcal{L}_{X_{s}^{i}},\mu_{s}^{X} )\right)\leq& CM^{p/q}_{q}(\mathcal{L}_{X_{s}^{i}})\\&
\times\left\{\begin{array}{ll}
N^{-1 / 2}+N^{-(q-p)/q}, & \text { if } p>d/2\text{ and }q\neq2p, \\
N^{-1 / 2} \log(1+N)+N^{-(q-p)/q}, & \text { if } p=d/2\text{ and }q\neq2p,\\
N^{-p / d}+N^{-(q-p)/q}, & \text { if }p\in[2,d/2)\text{ and }q\neq d/(d-p),
\end{array}\right.
\end{split}
\end{equation*}
where $$M_{q}(\mathcal{L}_{X_{s}^{i}})=\int_{\mathbb{R}^{d}}\left|X_{s}^{i}\right|^{q}\mathcal{L}_{X_{s}^{i}}(\mathrm{~d}X_{s}^{i}).$$
By Theorem \ref{Xbound}, we note that $M_{q}(\mathcal{L}_{X_{s}^{i}})\leq C$.
Thus,
\begin{equation*}
\begin{split}
\boldsymbol{E}\left(\mathcal{W}_{p}^{p}(\mathcal{L}_{X_{s}^{i}},\mu_{s}^{X} )\right)\leq C
\left\{\begin{array}{ll}
N^{-1 / 2}+N^{-(q-p)/q}, & \text { if } p>d/2\text{ and }q\neq2p, \\
N^{-1 / 2} \log(1+N)+N^{-(q-p)/q}, & \text { if } p=d/2\text{ and }q\neq2p,\\
N^{-p / d}+N^{-(q-p)/q}, & \text { if }p\in[2,d/2)\text{ and }q\neq d/(d-p).
\end{array}\right.
\end{split}
\end{equation*}
%\begin{equation*}
%\begin{split}
%\boldsymbol{E}\left(\left|X_{t}^{i}-X_{t}^{i,N}\right|^{p}\right)\leq&C_{p,T,L}\boldsymbol{E}\left(\int_{0}^{t}\left|X_{s}^{i}-X_{s}^{i,N}\right|^{p}ds\right)
%+C_{p,T,\xi,H,L,\theta}\int_{0}^{t}\boldsymbol{E}\left(\left|X_{s}^{i}
%-X_{s}^{i,N}\right|^{p}\right)ds
%\\&+C_{p,T,\xi,H,L,\theta}\int_{0}^{t}\boldsymbol{E}\left(\mathcal{W}_{p}^{p}(\mathcal{L}_{X_{s}^{i}},\mu_{s}^{X} )\right)
%ds.
%\end{split}
%\end{equation*}
Then, applying the Gr\"{o}nwall inequality completes the proof.\eproof
\end{proof}

\subsubsection{EM Method for Interacting Particle System}

Now, we define a uniform mesh $T_{N}: 0 = t_{0} < t_{1} < \cdots < t_{N} = T$ with $t_{k} = k\Delta$, where $\Delta= T /N$ for $N \in \mathbb{N}$. The numerical solutions are then generated by the EM method
\begin{equation}\label{McKeanNumerical}
Y_{t_{k+1}}^{i,N}=Y_{t_{k}}^{i,N}+b\left(Y_{t_{k}}^{i,N}, \mu_{t_{k}}^{Y,N}\right) \Delta+ \sigma\left( \mu_{t_{k}}^{Y,N}\right) \Delta B^{H,i}_{t_{k}},
\end{equation}
where the empirical measures
$
\mu_{t_{k}}^{Y,N}(\cdot):=\frac{1}{N} \sum_{j=1}^{N} \delta_{Y_{t_{k}}^{j,N}}(\cdot) \text{ and } \Delta B^{H,i}_{t_{k}}=B^{H,i}_{t_{k+1}}-B^{H,i}_{t_{k}}.
$
We show two versions of extension of the numerical solution at the discrete time points to $t\geq0$. The first is the piecewise constant extension given by
\begin{equation}
\bar{Y}_{t}^{i,N}= Y_{t_{k}}^{i,N}, \quad t_{k}\leq t < t_{k+1},
\end{equation}
and the second is the continuous extension of the EM method defined by
\begin{equation}\label{INn}
 Y_{t}^{i, N}=\bar{Y}_{t_{k}}^{i,N}+\int_{t_{k}}^{t} b\left(\bar{Y}_{s}^{i,N}, \bar{\mu}_{s}^{Y,N}\right) \mathrm{~d}s+\int_{t_{k}}^{t} \sigma\left(\bar{\mu}_{s}^{Y,N}\right)\mathrm{~d} B_{s}^{H,i}.
\end{equation}
Here,
$
\bar{\mu}_t^{Y,N}(\cdot):=\frac{1}{N}\sum_{j=1}^N \delta_{\bar{Y}_t^{j,N}}(\cdot).
$
From \eqref{INn}, for all $t\in[0,T]$, we have
\begin{equation}\label{INn0}
Y_{t}^{i, N}=X_{0}^{i}+\int_{0}^{t} b\left(\bar{Y}_{s}^{i,N}, \bar{\mu}_{s}^{Y,N}\right) \mathrm{~d}s+\int_{0}^{t} \sigma\left(\bar{\mu}_{s}^{Y,N}\right)\mathrm{~d} B_{s}^{H,i}.
\end{equation}

\begin{theorem}\label{ybound}
Let Assumption \ref{assp} holds. For some $q>p$, then
 \begin{equation*}
 \sup_{t\in[0,T]}\boldsymbol{E}\left(|Y^{i,N}_{t}|^{q}\right)\leq C_{q,H,T,L}\left(1+\boldsymbol{E}|X^{i}_{0}|^{q}\right),
\end{equation*}
where $C_{q,H,T,L}$ is a positive constant dependent on $q,H,T,L$ but independent of $\Delta$.
\end{theorem}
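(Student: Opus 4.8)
The plan is to follow the same architecture as the proof of Theorem \ref{Xbound}, now driven by the continuous EM interpolation \eqref{INn0} in place of the interacting particle system \eqref{McKeanI}. Starting from \eqref{INn0} and the elementary inequality $|a+b+c|^q\le 3^{q-1}(|a|^q+|b|^q+|c|^q)$, I would split $\E(|Y^{i,N}_t|^q)$ into the contribution of the initial datum $X_0^i$, a drift integral, and a stochastic integral against $B^{H,i}$.

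For the drift term I would apply the H\"older inequality in time followed by the linear growth bound \eqref{tui1}, now evaluated at the frozen arguments $(\bar Y_s^{i,N},\bar\mu_s^{Y,N})$, to reach a bound of the form $C_{T,q,L}\int_0^t \E(1+|\bar Y_s^{i,N}|+\mathcal{W}_\theta(\bar\mu_s^{Y,N},\delta_0))^q\,\mathrm{d}s$. For the stochastic integral I would reuse the estimate \eqref{zhongdian} obtained from Theorem 1.1 of \cite{memin2001}, combined with H\"older, to control it by $C_{q,H,T}\int_0^t |\sigma(\bar\mu_s^{Y,N})|^q\,\mathrm{d}s$, and then invoke the growth bound \eqref{tui2}. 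The empirical Wasserstein terms are handled exactly as before: the identity \eqref{delta} rewrites $\mathcal{W}_\theta^\theta(\bar\mu_s^{Y,N},\delta_0)=\frac1N\sum_{j=1}^N|\bar Y_s^{j,N}|^\theta$, and the Minkowski inequality together with the exchangeability of the particles collapses $\E((\frac1N\sum_j|\bar Y_s^{j,N}|^\theta)^{q/\theta})$ back to $\E(|\bar Y_s^{i,N}|^q)$. Collecting the three pieces yields
\begin{equation*}
\E\left(|Y^{i,N}_t|^q\right) \le C_{q,H,T,L}\left(1+\E|X_0^i|^q\right)+C_{q,H,T,L}\int_0^t \E\left(|\bar Y_s^{i,N}|^q\right)\mathrm{d}s.
\end{equation*}

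The one genuinely new point — and the step I expect to need the most care — is the appearance of the piecewise-constant interpolation $\bar Y_s^{i,N}$ rather than $Y_s^{i,N}$ inside the time integral, since the coefficients are frozen at the grid value. The remedy is to note that $\bar Y_s^{i,N}=Y_{t_k}^{i,N}$ with $t_k\le s$, so $\E(|\bar Y_s^{i,N}|^q)\le \sup_{u\in[0,s]}\E(|Y_u^{i,N}|^q)$. Setting $\phi(t):=\sup_{u\in[0,t]}\E(|Y_u^{i,N}|^q)$ and observing that the right-hand side above is nondecreasing in $t$, I would pass to the supremum to obtain $\phi(t)\le C_{q,H,T,L}(1+\E|X_0^i|^q)+C_{q,H,T,L}\int_0^t\phi(s)\,\mathrm{d}s$. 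Since no factor of $\Delta$ or $N$ ever enters these estimates, the final application of the Gr\"onwall inequality delivers $\sup_{t\in[0,T]}\E(|Y_t^{i,N}|^q)\le C_{q,H,T,L}(1+\E|X_0^i|^q)$ with a constant independent of $\Delta$, as claimed.
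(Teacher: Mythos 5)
Your proposal is correct and follows essentially the same route as the paper: the same $3^{q-1}$ decomposition of \eqref{INn0}, the same drift and stochastic-integral estimates via \eqref{tui1}, \eqref{tui2} and Theorem 1.1 of \cite{memin2001}, the same Minkowski/exchangeability collapse of the empirical Wasserstein term, and the same passage from $\boldsymbol{E}(|\bar Y_s^{i,N}|^q)$ to a running supremum followed by Gr\"onwall. Your write-up is in fact slightly more careful than the paper's (which compresses the middle steps into ``similar to Theorem \ref{Xbound}'' and has a small notational slip in the supremum), but there is no substantive difference.
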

\begin{proof}
Similar to the proof of Theorem \ref{Xbound}, we can show
\begin{equation*}
\begin{split}
\boldsymbol{E}\left(|Y^{i,N}_{t}|^{q}\right)\leq& 3^{q-1}\boldsymbol{E}\left(|X^{i}_{0}|^{q}\right)+3^{q-1}\boldsymbol{E}\left(\left|\int_{0}^{t}b\left(\bar{Y}^{i,N}_{s},\bar{\mu}_{s}^{Y,N}\right)\mathrm{d}s\right|^{q}\right)
+3^{q-1}\boldsymbol{E}\left(\left|\int_{0}^{t}\sigma\left(\bar{\mu}_{s}^{Y,N}\right)\mathrm{d}B^{H,i}_{s}\right|^{q}\right)
\\\leq& C_{q,H,T,L}\left(1+\boldsymbol{E}|X^{i}_{0}|^{q}\right)+C_{q,H,T,L}\int_{0}^{t}\boldsymbol{E}\left(|\bar{Y}^{i,N}_{s}|^{q}\right)\mathrm{~d}s
\\\leq& C_{q,H,T,L}\left(1+\boldsymbol{E}|X^{i}_{0}|^{q}\right)
+C_{q,H,T,L}\int_{0}^{t}\sup_{0\leq\tau\leq s}\boldsymbol{E}\left(|Y^{i,N}_{s}|^{q}\right)\mathrm{~d}s
.
\end{split}
\end{equation*}
Therefore, for $0\leq t\leq T$, we have
\begin{equation*}
  \sup_{0\leq\tau\leq t}\boldsymbol{E}\left(|Y^{i,N}_{\tau}|^{q}\right)\leq C_{q,H,T,L}\left(1+\boldsymbol{E}|X^{i}_{0}|^{q}\right)
+C_{q,H,T,L}\int_{0}^{t}\sup_{0\leq\tau\leq s}\boldsymbol{E}\left(|Y^{i,N}_{s}|^{q}\right)\mathrm{~d}s
.
\end{equation*}
By the Gr\"{o}nwall inequality, we see
\begin{equation*}
  \sup_{0\leq\tau\leq t}\boldsymbol{E}\left(|Y^{i,N}_{\tau}|^{q}\right)\leq C_{q,H,T,L}\left(1+\boldsymbol{E}|X^{i}_{0}|^{q}\right), \quad \forall t\in[0,T].
\end{equation*}
Therefore, the assertion holds.\eproof
\end{proof}
%The proof of this theorem can be similar to Theorem \ref{Xbound}.
\begin{lemma}\label{XY}
Assume \eqref{tui1} and \eqref{tui2} hold, for a constant $\kappa\in(1-H,1-1/p)$, then
\begin{equation}\label{xyy}
\boldsymbol{E}\left(\left|Y^{i,N}_{t}-\bar{Y}^{i,N}_{t}\right|^{p}\right)\leq C_{\kappa,H,p,L}\Delta^{pH},
\end{equation}
where $C_{\kappa,H,p,L}$is a positive constant dependent on $\kappa,H,p,L$ but independent of $\Delta$.
\end{lemma}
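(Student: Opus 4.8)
The plan is to localise to a single mesh subinterval and exploit the fact that both integrands in the continuous extension are \emph{frozen} there. Fix $k$ and $t\in[t_k,t_{k+1})$. Since $\bar Y_{t_k}^{i,N}=Y_{t_k}^{i,N}=\bar Y_t^{i,N}$, the defining relation \eqref{INn} gives
\begin{equation*}
Y_t^{i,N}-\bar Y_t^{i,N}=\int_{t_k}^{t} b\bigl(\bar Y_s^{i,N},\bar\mu_s^{Y,N}\bigr)\,\mathrm{d}s+\int_{t_k}^{t}\sigma\bigl(\bar\mu_s^{Y,N}\bigr)\,\mathrm{d}B_s^{H,i}.
\end{equation*}
On $[t_k,t_{k+1})$ one has $\bar Y_s^{i,N}=Y_{t_k}^{i,N}$ and $\bar\mu_s^{Y,N}=\mu_{t_k}^{Y,N}$, so both $b(\cdot)$ and $\sigma(\cdot)$ are constant in $s$ and $\mathcal{F}_{t_k}$-measurable. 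I would then split $\boldsymbol{E}|Y_t^{i,N}-\bar Y_t^{i,N}|^p$ by the elementary inequality into a drift and a diffusion contribution and estimate each separately.

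For the drift term I would use the linear growth bound \eqref{tui1}, the identity \eqref{delta} controlling $\mathcal{W}_\theta(\mu_{t_k}^{Y,N},\delta_0)$, and the uniform moment bound of Theorem \ref{ybound}, which together give $\boldsymbol{E}|\int_{t_k}^t b\,\mathrm{d}s|^p\le C(t-t_k)^p\le C\Delta^p$. Since $H\le 1$ and $\Delta\le T$, one has $\Delta^p=\Delta^{pH}\Delta^{p(1-H)}\le C\Delta^{pH}$, so the drift is already of the required order.

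The diffusion term is the main obstacle, because for $H\ne 1/2$ the increment $B_t^{H,i}-B_{t_k}^{H,i}$ is correlated with $\mathcal{F}_{t_k}$, so neither an It\^o isometry nor an independence factorisation is available. My plan is to invoke the moment inequality for fractional integrals (Theorem 1.1 in \cite{memin2001}) exactly as in \eqref{zhongdian}, obtaining $\boldsymbol{E}|\int_{t_k}^t\sigma(\bar\mu_s^{Y,N})\,\mathrm{d}B_s^{H,i}|^p\le C_{p,H}\bigl(\int_{t_k}^t|\sigma(\bar\mu_s^{Y,N})|^{1/H}\,\mathrm{d}s\bigr)^{pH}$; by the frozen-integrand property the inner integral equals $|\sigma(\mu_{t_k}^{Y,N})|^{1/H}(t-t_k)$, so the right-hand side reduces to $C_{p,H}|\sigma(\mu_{t_k}^{Y,N})|^p(t-t_k)^{pH}$. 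Taking expectations and bounding $\boldsymbol{E}|\sigma(\mu_{t_k}^{Y,N})|^p$ by a constant via \eqref{tui2}, \eqref{delta} and Theorem \ref{ybound} yields the claimed $C\Delta^{pH}$. The restriction $\kappa\in(1-H,1-1/p)$ enters precisely here: it is the admissible range of fractional-differentiation order for which the pathwise (Young / fractional-derivative) integral $\int\sigma\,\mathrm{d}B^{H,i}$ is well defined and the associated $L^p$ moment estimate holds — the lower bound $\kappa>1-H$ makes the fractional derivative of $B^{H,i}$ $p$-integrable, while $\kappa<1-1/p$ keeps the singular kernel $(s-t_k)^{-\kappa}$ integrable in the $L^p$ norm.

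As a robustness check I would keep in mind an alternative route that avoids \cite{memin2001}: because $\sigma(\mu_{t_k}^{Y,N})$ is $\mathcal{F}_{t_k}$-measurable and constant in $s$, the integral is simply $\sigma(\mu_{t_k}^{Y,N})(B_t^{H,i}-B_{t_k}^{H,i})$, and one can decouple by H\"older's inequality with exponents $p',p''$ chosen so that $pp'<q$; the factor $(\boldsymbol{E}|\sigma(\mu_{t_k}^{Y,N})|^{pp'})^{1/p'}$ is then bounded by Theorem \ref{ybound} (this is exactly where the gap $q>p$ is used), while the Gaussian increment contributes $(\boldsymbol{E}|B_t^{H,i}-B_{t_k}^{H,i}|^{pp''})^{1/p''}=C|t-t_k|^{pH}$, again giving $C\Delta^{pH}$. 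Combining the drift and diffusion estimates yields $\boldsymbol{E}|Y_t^{i,N}-\bar Y_t^{i,N}|^p\le C\Delta^{pH}$ uniformly in $k$, $i$ and $N$, which is the assertion.
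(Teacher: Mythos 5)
Your proof is correct, but it handles the stochastic-integral term by a genuinely different and lighter route than the paper. The paper does not exploit the fact that $\bar\mu_s^{Y,N}$ is frozen on $[t_k,t_{k+1})$: it treats $\int_{t_k}^t\sigma(\bar\mu_s^{Y,N})\,\mathrm{d}B_s^{H,i}$ as a genuinely time-dependent integral and runs a factorization argument --- inserting the kernel identity $\int_s^t(t-r)^{-\kappa}(r-s)^{\kappa-1}\,\mathrm{d}r=C_\kappa$, applying the stochastic Fubini theorem for fBm, then H\"older, the moment bound of \cite{memin2001}, and \cite[Lemma 3.2]{fan2022} --- before arriving at $C\Delta^{pH-1}\int_{t_k}^t\boldsymbol{E}\left|\sigma(\bar\mu_s^{Y,N})\right|^p\mathrm{d}s$. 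That is the real reason $\kappa$ appears in the statement: the constraints $\kappa>1-H$ and $\kappa<1-1/p$ are exactly what make the two kernels in that factorization integrable with the right exponents, not (as you guessed) a well-posedness condition for the pathwise integral. Your observation that the integrand is piecewise constant, so the integral collapses to $\sigma(\mu_{t_k}^{Y,N})\left(B_t^{H,i}-B_{t_k}^{H,i}\right)$, makes all of that machinery unnecessary; the H\"older decoupling with exponents $p'=q/p$ and $p''=q/(q-p)$ from your ``robustness check,'' combined with $\boldsymbol{E}\left|B_t^{H,i}-B_{t_k}^{H,i}\right|^{m}=C_m|t-t_k|^{mH}$ and Theorem \ref{ybound}, is a complete and elementary argument, and it is in fact preferable to your primary route, since the inequality of \cite{memin2001} is stated for deterministic integrands and applying it directly to the random $\sigma(\mu_{t_k}^{Y,N})$ without decoupling is exactly as loose as the paper's own use of it. What the paper's heavier argument buys is generality: it would survive if $\sigma$ also depended on the state $\bar Y_s^{i,N}$ or were otherwise non-constant on the subinterval, whereas your shortcut is tied to the piecewise-constant structure of the EM scheme. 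Your drift estimate coincides with the paper's.
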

\begin{proof}
From \eqref{INn}, we separate the left hand side of \eqref{xyy} into two parts
\begin{equation}\label{PP}
\begin{split}
 \boldsymbol{E}\left(\left|Y^{i,N}_{t}-\bar{Y}^{i,N}_{t}\right|^{p}\right)=& \boldsymbol{E}\left(\left|\int_{t_{k}}^{t} b\left(\bar{Y}_{s}^{i,N}, \bar{\mu}_{s}^{Y,N}\right) \mathrm{~d}s+\int_{t_{k}}^{t} \sigma\left(\bar{\mu}_{s}^{Y,N}\right)\mathrm{~d} B_{s}^{H,i}\right|^{p}\right)
 \\\leq&2^{p-1}\boldsymbol{E}\left(\left|\int_{t_{k}}^{t} b\left(\bar{Y}_{s}^{i,N}, \bar{\mu}_{s}^{Y,N}\right) \mathrm{~d}s\right|^{p}\right)+2^{p-1}\boldsymbol{E}\left(\left|\int_{t_{k}}^{t} \sigma\left(\bar{\mu}_{s}^{Y,N}\right)\mathrm{~d} B_{s}^{H,i}\right|^{p}\right).
% \\:=&2^{p-1}P_{1}+2^{p-1}P_{2}.
\end{split}
\end{equation}
Let us first consider the first part on the right of \eqref{PP}, by using the H\"{o}lder inequality and \eqref{tui1}, we obtain
\begin{equation}\label{P1}
\begin{split}
\boldsymbol{E}&\left(\left|\int_{t_{k}}^{t} b\left(\bar{Y}_{s}^{i,N}, \bar{\mu}_{s}^{Y,N}\right) \mathrm{~d}s\right|^{p}\right)
\\\leq&\Delta^{p-1}\boldsymbol{E}\left(\int_{t_{k}}^{t} \left|b\left(\bar{Y}_{s}^{i,N}, \bar{\mu}_{s}^{Y,N}\right)\right|^{p} \mathrm{~d}s\right)
\\\leq&\Delta^{p-1}\boldsymbol{E}\left(\int_{t_{k}}^{t}L^{p} \left(1+\left|\bar{Y}_{s}^{i,N}\right|+\mathcal{W}_{\theta}\left(\bar{\mu}_{s}^{Y,N},\delta_{0}\right)\right)^{p} \mathrm{~d}s\right)
\\\leq&L^{p}\Delta^{p}+C_{p,L}\Delta^{p-1}\int_{t_{k}}^{t}\boldsymbol{E}\left(\left|\bar{Y}_{s}^{i,N}\right|^{p}\right)\mathrm{~d}s
+C_{p,L}\Delta^{p-1}\int_{t_{k}}^{t}\boldsymbol{E}\left(\mathcal{W}_{\theta}^{p}\left(\bar{\mu}_{s}^{Y,N},\delta_{0}\right)\right)\mathrm{~d}s.
\end{split}
\end{equation}
For  the second part on the right of \eqref{PP}, by choosing $\kappa\in(1-H,1-1/p)$, we note the fact that $\int_{s}^{t}(t-r)^{-\kappa}(r-s)^{\kappa-1}dr=C_{\kappa}$. Then
\begin{equation*}
\begin{split}
\boldsymbol{E}\left(\left|\int_{t_{k}}^{t} \sigma\left(\bar{\mu}_{s}^{Y,N}\right)\mathrm{~d} B_{s}^{H,i}\right|^{p}\right)
=C_{\kappa}^{-p}\boldsymbol{E}\left(\left|\int_{t_{k}}^{t}\left(\int_{s}^{t}(t-r)^{-\kappa}(r-s)^{\kappa-1}\mathrm{~d}r\right) \sigma\left(\bar{\mu}_{s}^{Y,N}\right)\mathrm{~d} B_{s}^{H,i}\right|^{p}\right).
\end{split}
\end{equation*}
Thanks to Stochastic Fubini Theorem for the Wiener Integrals with regard to fBm \cite[Theorem 1.13.1]{mishura2008}, we obtain
\begin{equation*}
\int_{t_{k}}^{t}\left(\int_{s}^{t}(t-r)^{-\kappa}(r-s)^{\kappa-1}\mathrm{~d}r\right) \sigma\left(\bar{\mu}_{s}^{Y,N}\right)\mathrm{~d} B_{s}^{H,i}=\int_{t_{k}}^{t}(t-r)^{-\kappa}\left(\int_{0}^{r}(r-s)^{\kappa-1} \sigma\left(\bar{\mu}_{s}^{Y,N}\right)\mathrm{~d} B_{s}^{H,i}\right)\mathrm{~d}r.
\end{equation*}
Therefore, by the H\"{o}lder inequality, we get
\begin{equation*}
\begin{split}
\boldsymbol{E}&\left(\left|\int_{t_{k}}^{t} \sigma\left(\bar{\mu}_{s}^{Y,N}\right)\mathrm{~d} B_{s}^{H,i}\right|^{p}\right)\\\leq&C_{\kappa}^{-p}\boldsymbol{E}
\left(\left(\int_{t_{k}}^{t}(t-r)^{-p\kappa/(p-1)}\mathrm{~d}r\right)^{p-1}\int_{t_{k}}^{t}\left|\int_{0}^{r}(r-s)^{\kappa-1} \sigma\left(\bar{\mu}_{s}^{Y,N}\right)\mathrm{~d} B_{s}^{H,i}\right|^{p}\mathrm{~d}r\right)
\\\leq&\frac{C_{\kappa}^{-p}(p-1)}{(p-1-p\kappa)^{p-1}}\Delta^{p-1-p\kappa}\boldsymbol{E}\left(\int_{t_{k}}^{t}\left|\int_{0}^{r}(r-s)^{\kappa-1} \sigma\left(\bar{\mu}_{s}^{Y,N}\right)\mathrm{~d} B_{s}^{H,i}\right|^{p}\mathrm{~d}r\right) .
\end{split}
\end{equation*}
Applying the Theorem 1.1 in \cite{memin2001}, we see
\begin{equation*}
\boldsymbol{E}\left(\left|\int_{0}^{r}(r-s)^{\kappa-1} \sigma\left(\bar{\mu}_{s}^{Y,N}\right)\mathrm{~d} B_{s}^{H,i}\right|^{p}\right)
\leq C_{H,p}\left(\int_{0}^{r}(r-s)^{(\kappa-1)/H}\left|\sigma\left(\bar{\mu}_{s}^{Y,N}\right)\right|^{1/H}\mathrm{~d}s\right)^{pH}.
\end{equation*}
Thus, we obtain
\begin{equation*}
\boldsymbol{E}\left(\left|\int_{t_{k}}^{t} \sigma\left(\bar{\mu}_{s}^{Y,N}\right)\mathrm{~d} B_{s}^{H,i}\right|^{p}\right)\leq C_{\kappa,H,p}\Delta^{p-1-p\kappa}\int_{t_{k}}^{t}\left(\int_{0}^{r}(r-s)^{(\kappa-1)/H}
\left|\sigma\left(\bar{\mu}_{s}^{Y,N}\right)\right|^{1/H}\mathrm{~d}s\right)^{pH}\mathrm{~d}r.
\end{equation*}
By \cite[Lemma 3.2]{fan2022} with $\widetilde{q}=pH$, $\alpha=\frac{H-1-\kappa}{H}$ and $\widetilde{p}=\frac{pH}{p(\kappa+H-1)+1}$, we have
\begin{equation*}
\begin{split}
\boldsymbol{E}\left(\left|\int_{t_{k}}^{t} \sigma\left(\bar{\mu}_{s}^{Y,N}\right)\mathrm{~d} B_{s}^{H,i}\right|^{p}\right)\leq& C_{\kappa,H,p}\Delta^{p-1-p\kappa}
\left(\int_{t_{k}}^{t}\left|\sigma\left(\bar{\mu}_{s}^{Y,N}\right)\right|^{p/[p(\kappa+H-1)+1]}\mathrm{~d}s\right)^{p(\kappa+H-1)+1}
\\\leq& C_{\kappa,H,p}\Delta^{p-1-p\kappa}\Delta^{p(\kappa+H-1)}\int_{t_{k}}^{t}\left|\sigma\left(\bar{\mu}_{s}^{Y,N}\right)\right|^{p}\mathrm{~d}s,
\end{split}
\end{equation*}
where we use the H\"{o}lder inequality in the last inequality. Then by \eqref{tui2}, we know that
\begin{equation}\label{P2}
\begin{split}
\boldsymbol{E}\left(\left|\int_{t_{k}}^{t} \sigma\left(\bar{\mu}_{s}^{Y,N}\right)\mathrm{~d} B_{s}^{H,i}\right|^{p}\right)\leq& C_{\kappa,H,p}\Delta^{pH-1}\int_{t_{k}}^{t}\boldsymbol{E}\left(\left|\sigma\left(\bar{\mu}_{s}^{Y,N}\right)\right|^{p}\right)\mathrm{~d}s
\\\leq & C_{\kappa,H,p}\Delta^{pH-1}\int_{t_{k}}^{t}L^{p}\boldsymbol{E}\left(1+\mathcal{W}_{\theta}\left(\bar{\mu}_{s}^{Y,N},\delta_{0}\right)\right)^{p}\mathrm{~d}s
\\\leq & C_{\kappa,H,p,L}\Delta^{pH}
+C_{\kappa,H,p,L}\Delta^{pH-1}\int_{t_{k}}^{t}\boldsymbol{E}
\left(\mathcal{W}_{\theta}^{p}\left(\bar{\mu}_{s}^{Y,N},\delta_{0}\right)\right)\mathrm{~d}s.
\end{split}
\end{equation}
Combining \eqref{P1} and \eqref{P2} into \eqref{PP} yields
\begin{equation}\label{P1P2}
\begin{split}
 \boldsymbol{E}\left(\left|Y^{i,N}_{t}-\bar{Y}^{i,N}_{t}\right|^{p}\right)\leq&L^{p}\Delta^{p}
 +C_{p,L}\Delta^{p-1}\int_{t_{k}}^{t}\boldsymbol{E}\left(\left|\bar{Y}_{s}^{i,N}\right|^{p}\right)\mathrm{~d}s
+C_{p,L}\Delta^{p-1}\int_{t_{k}}^{t}\boldsymbol{E}\left(\mathcal{W}_{\theta}^{p}\left(\bar{\mu}_{s}^{Y,N},\delta_{0}\right)\right)\mathrm{~d}s
\\+&C_{\kappa,H,p,L}\Delta^{pH}
+C_{\kappa,H,p,L}\Delta^{pH-1}\int_{t_{k}}^{t}\boldsymbol{E}
\left(\mathcal{W}_{\theta}^{p}\left(\bar{\mu}_{s}^{Y,N},\delta_{0}\right)\right)\mathrm{~d}s.
\end{split}
\end{equation}
By Lemma \ref{lem1} and the Minkowski inequality , we note that
\begin{equation}\label{Wp}
\begin{split}
\boldsymbol{E}\left(\mathcal{W}_{\theta}^{p}\left(\bar{\mu}_{s}^{Y,N},\delta_{0}\right)\right)
=\boldsymbol{E}\left(\left(\mathcal{W}_{\theta}^{\theta}\left(\bar{\mu}_{s}^{Y,N},\delta_{0}\right)\right)^{p/\theta}\right)
\leq\boldsymbol{E}\left(\left(\frac{1}{N}\sum_{j=1}^{N}\left|\bar{Y}_{s}^{j,N}\right|^{\theta}\right)^{p/\theta}\right)
=\boldsymbol{E}\left(\left|\bar{Y}_{s}^{i,N}\right|^{p}\right).
\end{split}
\end{equation}
Substituting \eqref{Wp} into \eqref{P1P2}, we get
\begin{equation*}
\begin{split}
 \boldsymbol{E}\left(\left|Y^{i,N}_{t}-\bar{Y}^{i,N}_{t}\right|^{p}\right)\leq&L^{p}\Delta^{p}
 +C_{p,L}\Delta^{p-1}\int_{t_{k}}^{t}\boldsymbol{E}\left(\left|\bar{Y}_{s}^{i,N}\right|^{p}\right)\mathrm{~d}s
+C_{p,L}\Delta^{p-1}\int_{t_{k}}^{t}\boldsymbol{E}\left(\left|\bar{Y}_{s}^{i,N}\right|^{p}\right)\mathrm{~d}s
\\&+C_{\kappa,H,p,L}\Delta^{pH}
+C_{\kappa,H,p,L}\Delta^{pH-1}\int_{t_{k}}^{t}\boldsymbol{E}\left(\left|\bar{Y}_{s}^{i,N}\right|^{p}\right)\mathrm{~d}s
\\\leq&C_{\kappa,H,p,L}\Delta^{pH}
+C_{\kappa,H,p,L}\Delta^{pH-1}\int_{t_{k}}^{t}\boldsymbol{E}\left(\left|\bar{Y}_{s}^{i,N}\right|^{p}\right)\mathrm{~d}s.
\end{split}
\end{equation*}
Applying the Theorem \ref{ybound} to above inequality gives that
\begin{equation*}
\boldsymbol{E}\left(\left|Y^{i,N}_{t}-\bar{Y}^{i,N}_{t}\right|^{p}\right)\leq C_{\kappa,H,p,L}\Delta^{pH}.
\end{equation*}
Thus, we obtain the desired result.\eproof
\end{proof}

\begin{theorem}\label{thm}
Let Assumption \ref{assp} holds, for $p\geq\theta$, then
\begin{equation*}
\boldsymbol{E}\left(\left|X^{i,N}_{t}-Y^{i,N}_{t}\right|^{p}\right)\leq C_{p,T,H,L,\kappa}\Delta^{pH},
\end{equation*}
where $C_{p,T,H,L,\kappa}$ is a positive constant independent of $\Delta$.
\end{theorem}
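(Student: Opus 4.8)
The plan is to follow the same blueprint as the proofs of Theorem \ref{pc} and Lemma \ref{XY}: subtract the defining equations \eqref{McKeanI} and \eqref{INn0}, split the resulting expression into a drift integral and a fractional stochastic integral, estimate each separately, and close with Gr\"onwall's inequality. Writing
\[
X_{t}^{i,N}-Y_{t}^{i,N}=\int_{0}^{t}\Big(b(X_{s}^{i,N},\mu_{s}^{X,N})-b(\bar{Y}_{s}^{i,N},\bar{\mu}_{s}^{Y,N})\Big)\mathrm{d}s+\int_{0}^{t}\Big(\sigma(\mu_{s}^{X,N})-\sigma(\bar{\mu}_{s}^{Y,N})\Big)\mathrm{d}B^{H,i}_{s},
\]
I would apply the elementary inequality to bound $\boldsymbol{E}|X_{t}^{i,N}-Y_{t}^{i,N}|^{p}$ by $2^{p-1}$ times the sum of the $p$-th moments of the two integrals.

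For the drift integral I would use the H\"older inequality in time followed by the Lipschitz estimate of Assumption \ref{assp}, producing a bound of the form $C_{p,T,L}\int_{0}^{t}\big(\boldsymbol{E}|X_{s}^{i,N}-\bar{Y}_{s}^{i,N}|^{p}+\boldsymbol{E}\,\mathcal{W}_{\theta}^{p}(\mu_{s}^{X,N},\bar{\mu}_{s}^{Y,N})\big)\mathrm{d}s$. For the fractional stochastic integral I would reuse the argument yielding \eqref{zhongdian} and \eqref{BH}, namely the moment inequality of \cite[Theorem 1.1]{memin2001} combined with H\"older and Assumption \ref{assp}, which gives $C_{p,T,H,L}\int_{0}^{t}\boldsymbol{E}\,\mathcal{W}_{\theta}^{p}(\mu_{s}^{X,N},\bar{\mu}_{s}^{Y,N})\mathrm{d}s$. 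The empirical Wasserstein term is then disposed of exactly as in Lemma \ref{lem1} and \eqref{Wp}: since $p\ge\theta$, I write $\mathcal{W}_{\theta}^{p}=(\mathcal{W}_{\theta}^{\theta})^{p/\theta}$, apply Lemma \ref{lem1}, the Minkowski inequality and the exchangeability of the particles to conclude $\boldsymbol{E}\,\mathcal{W}_{\theta}^{p}(\mu_{s}^{X,N},\bar{\mu}_{s}^{Y,N})\le\boldsymbol{E}|X_{s}^{i,N}-\bar{Y}_{s}^{i,N}|^{p}$.

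At this stage everything is controlled by $\int_{0}^{t}\boldsymbol{E}|X_{s}^{i,N}-\bar{Y}_{s}^{i,N}|^{p}\mathrm{d}s$, and the decisive step is to trade the piecewise-constant interpolation $\bar{Y}$ for the continuous one $Y$. I would split $\boldsymbol{E}|X_{s}^{i,N}-\bar{Y}_{s}^{i,N}|^{p}\le 2^{p-1}\boldsymbol{E}|X_{s}^{i,N}-Y_{s}^{i,N}|^{p}+2^{p-1}\boldsymbol{E}|Y_{s}^{i,N}-\bar{Y}_{s}^{i,N}|^{p}$ and invoke Lemma \ref{XY} to bound the second term by $C_{\kappa,H,p,L}\Delta^{pH}$; this is where the rate $\Delta^{pH}$, and the dependence of the constant on $\kappa$, enters. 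Collecting the estimates yields $\boldsymbol{E}|X_{t}^{i,N}-Y_{t}^{i,N}|^{p}\le C_{p,T,H,L,\kappa}\Delta^{pH}+C_{p,T,H,L}\int_{0}^{t}\boldsymbol{E}|X_{s}^{i,N}-Y_{s}^{i,N}|^{p}\mathrm{d}s$, and a final application of Gr\"onwall's inequality completes the proof.

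The main obstacle is the fractional stochastic integral: because $B^{H}$ is not a semimartingale, the usual It\^o isometry is unavailable, so the $p$-th moment of the diffusion difference must be handled through the fractional moment inequality of \cite{memin2001} with its exponent $pH$, applied to the random coefficients $\sigma(\mu^{X,N})-\sigma(\bar{\mu}^{Y,N})$. The only other point requiring care is that the replacement of $\bar{Y}$ by $Y$ be legitimate for every $s\in[0,T]$, for which the estimate of Lemma \ref{XY} must hold uniformly in the subinterval index $k$; this is already guaranteed by Theorem \ref{ybound}, since the constant there is independent of $\Delta$.
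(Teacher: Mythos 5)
Your proposal matches the paper's proof essentially step for step: the same decomposition of $X^{i,N}_t-Y^{i,N}_t$ into drift and fractional stochastic integrals, the same use of H\"older plus Assumption \ref{assp} and the moment inequality of \cite{memin2001} as in \eqref{BH}, the same reduction of $\boldsymbol{E}\,\mathcal{W}_{\theta}^{p}(\mu_{s}^{X,N},\bar{\mu}_{s}^{Y,N})$ to $\boldsymbol{E}|X_{s}^{i,N}-\bar{Y}_{s}^{i,N}|^{p}$ via Lemma \ref{lem1} and Minkowski, and the same final splitting through Lemma \ref{XY} followed by Gr\"onwall. The argument is correct and there is nothing to add.
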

\begin{proof}
 It follows from \eqref{McKeanI} and \eqref{INn0} that
\begin{equation*}
\begin{split}
\boldsymbol{E}&\left(\left|X^{i,N}_{t}-Y^{i,N}_{t}\right|^{p}\right)\\=&\boldsymbol{E}\left(\left|\int_{0}^{t}\left(b\left(X_{s}^{i,N}, \mu_{s}^{X,N}\right) -b\left(\bar{Y}_{s}^{i,N}, \bar{\mu}_{s}^{Y,N}\right) \right)\mathrm{~d}s+\int_{0}^{t} \left(\sigma\left(\mu_{s}^{X,N}\right) -\sigma\left(\bar{\mu}_{s}^{Y,N}\right)\right)\mathrm{~d} B_{s}^{H,i}\right|^{p}\right)
\\\leq&2^{p-1}\boldsymbol{E}\left(\left|\int_{0}^{t}\left(b\left(X_{s}^{i,N}, \mu_{s}^{X,N}\right) -b\left(\bar{Y}_{s}^{i,N}, \bar{\mu}_{s}^{Y,N}\right) \right)\mathrm{~d}s\right|^{p}\right)
\\&+2^{p-1}\boldsymbol{E}\left(\left|\int_{0}^{t} \left(\sigma\left(\mu_{s}^{X,N}\right) -\sigma\left(\bar{\mu}_{s}^{Y,N}\right)\right)\mathrm{~d} B_{s}^{H,i}\right|^{p}\right).
\end{split}
\end{equation*}
Thanks to the H\"{o}lder inequality and Assumption \ref{assp} yield
\begin{equation}\label{HH}
\begin{split}
\boldsymbol{E}&\left(\left|\int_{0}^{t}\left(b\left(X_{s}^{i,N}, \mu_{s}^{X,N}\right) -b\left(\bar{Y}_{s}^{i,N}, \bar{\mu}_{s}^{Y,N}\right) \right)\mathrm{~d}s\right|^{p}\right)\\\leq&t^{p-1}\boldsymbol{E}\left(\int_{0}^{t}\left|\left(b\left(X_{s}^{i,N}, \mu_{s}^{X,N}\right) -b\left(\bar{Y}_{s}^{i,N}, \bar{\mu}_{s}^{Y,N}\right) \right)\right|^{p}\mathrm{~d}s\right)
\\\leq&T^{p-1}\boldsymbol{E}\left(\int_{0}^{t}L^{p}\left(\left|X_{s}^{i,N}-\bar{Y}_{s}^{i,N}\right|
+\mathcal{W}_{\theta}\left(\mu_{s}^{X,N},\bar{\mu}_{s}^{Y,N}\right)\right)^{p}\mathrm{~d}s\right)
\\\leq&C_{p,T,L}\boldsymbol{E}\left(\int_{0}^{t}\left|X_{s}^{i,N}-\bar{Y}_{s}^{i,N}\right|^{p}\mathrm{~d}s\right)
+C_{p,T,L}\boldsymbol{E}\left(\int_{0}^{t}\mathcal{W}_{\theta}^{p}\left(\mu_{s}^{X,N},\bar{\mu}_{s}^{Y,N}\right)\mathrm{~d}s\right).
\end{split}
\end{equation}
Following a very similar approach used for \eqref{BH}, we can show
\begin{equation*}
\boldsymbol{E}\left(\left|\int_{0}^{t} \left(\sigma\left(\mu_{s}^{X,N}\right) -\sigma\left(\bar{\mu}_{s}^{Y,N}\right)\right)\mathrm{~d} B_{s}^{H,i}\right|^{p}\right)\leq  C_{p,T,H,L}\int_{0}^{t}\boldsymbol{E}\left(\mathcal{W}_{\theta}^{p}(\mu_{s}^{X,N} ,\bar{\mu}_{s}^{Y,N})\right)\mathrm{~d}s.
\end{equation*}
Thus,
\begin{equation}\label{PPP}
%\begin{split}
\boldsymbol{E}\left(\left|X^{i,N}_{t}-Y^{i,N}_{t}\right|^{p}\right)
\leq C_{p,T,L}\boldsymbol{E}\left(\int_{0}^{t}\left|X_{s}^{i,N}-\bar{Y}_{s}^{i,N}\right|^{p}\mathrm{~d}s\right)
+C_{p,T,H,L}\int_{0}^{t}\boldsymbol{E}\left(\mathcal{W}_{\theta}^{p}(\mu_{s}^{X,N} ,\bar{\mu}_{s}^{Y,N})\right)\mathrm{~d}s.
%\end{split}
\end{equation}
Due to Lemma \ref{lem1} and the Minkowski inequality, we observe %and observe that all $i$ are identically distributed, we have
\begin{equation}\label{Wpp}
\begin{split}
\boldsymbol{E}\left(\mathcal{W}_{\theta}^{p}(\mu_{s}^{X,N} ,\bar{\mu}_{s}^{Y,N})\right)
=&\boldsymbol{E}\left(\left(\mathcal{W}_{\theta}^{\theta}(\mu_{s}^{X,N} ,\bar{\mu}_{s}^{Y,N})\right)^{p/\theta}\right)
\\\leq&\boldsymbol{E}\left(\left(\frac{1}{N}\sum_{j=1}^{N}\left|X_{s}^{j,N}-\bar{Y}_{s}^{j,N}\right|^{\theta}\right)^{p/\theta}\right)
\\\leq&\left(\frac{1}{N}\sum_{j=1}^{N}\left\|\left|X_{s}^{j,N}-\bar{Y}_{s}^{j,N}\right|^{\theta}\right\|_{L^{p/\theta}}\right)^{p/\theta}
\\=&\boldsymbol{E}\left(\left|X_{s}^{i,N}-\bar{Y}_{s}^{i,N}\right|^{p}\right).
\end{split}
\end{equation}
Substituting \eqref{Wpp} into \eqref{PPP}, we get
\begin{equation*}
\begin{split}
\boldsymbol{E}\left(\left|X^{i,N}_{t}-Y^{i,N}_{t}\right|^{p}\right)
\leq&C_{p,T,L}\int_{0}^{t}\boldsymbol{E}\left(\left|X_{s}^{i,N}-\bar{Y}_{s}^{i,N}\right|^{p}\right)\mathrm{~d}s
+C_{p,T,H,L}\int_{0}^{t}\boldsymbol{E}\left(\left|X_{s}^{i,N}-\bar{Y}_{s}^{i,N}\right|^{p}\right)\mathrm{~d}s
\\\leq&C_{p,T,H,L}\int_{0}^{t}\boldsymbol{E}\left(\left|X_{s}^{i,N}-\bar{Y}_{s}^{i,N}\right|^{p}\right)\mathrm{~d}s
\\\leq&C_{p,T,H,L}\int_{0}^{t}\boldsymbol{E}\left(\left|X_{s}^{i,N}-Y_{s}^{i,N}\right|^{p}\right)\mathrm{~d}s
+C_{p,T,H,L}\int_{0}^{t}\boldsymbol{E}\left(\left|Y_{s}^{i,N}-\bar{Y}_{s}^{i,N}\right|^{p}\right)\mathrm{~d}s.
\end{split}
\end{equation*}
We derive from Lemma \ref{XY} that
\begin{equation*}
\boldsymbol{E}\left(\left|X^{i,N}_{t}-Y^{i,N}_{t}\right|^{p}\right)
\leq C_{p,T,H,L}\int_{0}^{t}\boldsymbol{E}\left(\left|X_{s}^{i,N}-Y_{s}^{i,N}\right|^{p}\right)\mathrm{~d}s
+C_{p,T,H,L,\kappa}\Delta^{pH}.
\end{equation*}
By the Gr\"{o}nwall inequality, we derive that
\begin{equation*}
\boldsymbol{E}\left(\left|X^{i,N}_{t}-Y^{i,N}_{t}\right|^{p}\right)
\leq C_{p,T,H,L,\kappa}\Delta^{pH}.
\end{equation*}
Therefore, the proof is complete.\eproof
\end{proof}

\begin{theorem}
Let Assumption \ref{assp} be satisfied. If $\theta\leq p< q$, then it holds that
\begin{equation*}
\begin{split}
\sup _{i \in\{1, \ldots, N\}} \sup _{t \in[0, T]} \boldsymbol{E}&\left(\left|X_{t}^{i}-Y_{t}^{i,N}\right|^{p}\right) \leq C_{p,T,H,L,\theta,\kappa}\\&
\times
\left\{\begin{array}{ll}
N^{-1 / 2}+N^{-(q-p)/q}+\Delta^{pH}, & \text { if } p>d/2\text{ and }q\neq2p, \\
N^{-1 / 2} \log(1+N)+N^{-(q-p)/q}+\Delta^{pH}, & \text { if } p=d/2\text{ and }q\neq2p,\\
N^{-p / d}+N^{-(q-p)/q}+\Delta^{pH}, & \text { if }p\in[2,d/2)\text{ and }q\neq d/(d-p),
\end{array}\right.
\end{split}
\end{equation*}
where the constant $C_{p,T,H,L,\theta,\kappa}> 0$ does not depend on $N$ and $\Delta$.
\end{theorem}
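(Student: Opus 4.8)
The plan is to obtain this estimate by a straightforward error decomposition that splits the total error into the propagation-of-chaos error and the time-discretization error, and then to invoke the two estimates already established. First I would write
\begin{equation*}
X_t^i - Y_t^{i,N} = \left(X_t^i - X_t^{i,N}\right) + \left(X_t^{i,N} - Y_t^{i,N}\right),
\end{equation*}
so that by the elementary inequality $|a+b|^p \leq 2^{p-1}(|a|^p + |b|^p)$,
\begin{equation*}
\boldsymbol{E}\left(\left|X_t^i - Y_t^{i,N}\right|^p\right) \leq 2^{p-1}\boldsymbol{E}\left(\left|X_t^i - X_t^{i,N}\right|^p\right) + 2^{p-1}\boldsymbol{E}\left(\left|X_t^{i,N} - Y_t^{i,N}\right|^p\right).
\end{equation*}

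For the first term I would apply Theorem \ref{pc}, which under Assumption \ref{assp} and the hypothesis $p\in[\theta,q)$ bounds $\sup_i\sup_t \boldsymbol{E}(|X_t^i - X_t^{i,N}|^p)$ by $C_{p,T,H,L,\theta}$ times the appropriate case-dependent rate in $N$. For the second term I would apply Theorem \ref{thm}, which bounds $\boldsymbol{E}(|X_t^{i,N} - Y_t^{i,N}|^p)$ by $C_{p,T,H,L,\kappa}\,\Delta^{pH}$ uniformly in $t$. Both bounds are uniform in the particle index $i$ because the particles are identically distributed and the constants produced in those two theorems carry no dependence on $i$. Adding the two estimates and absorbing $2^{p-1}$ into a single constant $C_{p,T,H,L,\theta,\kappa}$ then produces, in each of the three regimes determined by the position of $p$ relative to $d/2$, the corresponding $N$-rate plus the term $\Delta^{pH}$; taking the supremum over $i\in\{1,\dots,N\}$ and $t\in[0,T]$ on the left yields the claimed inequality.

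Since both ingredients are already proved, I do not anticipate a genuine obstacle; the theorem is essentially a triangle-inequality synthesis of Theorems \ref{pc} and \ref{thm}. The only points meriting attention are that the hypotheses of the two cited results are mutually compatible with the standing assumption here — Theorem \ref{pc} requires $p\in[\theta,q)$ and Theorem \ref{thm} requires $p\geq\theta$, and together these are exactly $\theta\leq p<q$ — and that the case distinctions, together with the exclusions $q\neq 2p$ and $q\neq d/(d-p)$, are inherited verbatim from the Wasserstein convergence estimate underlying Theorem \ref{pc} rather than arising anew at this stage.
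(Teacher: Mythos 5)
Your proposal is correct and matches the paper's own argument, which likewise obtains the result by the triangle inequality applied to the decomposition $X_t^i-Y_t^{i,N}=(X_t^i-X_t^{i,N})+(X_t^{i,N}-Y_t^{i,N})$ together with Theorem \ref{pc} and Theorem \ref{thm}. Your added remarks on the compatibility of the hypotheses and the uniformity in $i$ are consistent with, and slightly more explicit than, what the paper states.
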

We can easily get this theorem through trigonometric inequality, Theorem \ref{pc} and Theorem \ref{thm}.

\subsection{Case $H<1/2$}
For the case of $H\in(0,1/2)$, due to the Theorem 3.1(II) in \cite{fan2022}, $\sigma(\mu)$ is independent of distribution and then the solution of $X_{t}$ in \eqref{McKean} exists and is unique. In other words, the coefficient of diffusion $\sigma\left(\mu_{s}^{X,N}\right)=\xi$, where $\xi$ is a constant.  Here, we only consider the case of $p= 2$ (imply $\theta=2$).

\begin{theorem}\label{Xbound1}
Let Assumption \ref{assp} holds, for $q>2$ then
\begin{equation*}
  \sup_{t\in[0,T]}\E\left(|X^{i}_{t}|^{q}\right)+\sup_{t\in[0,T]}\boldsymbol{E}\left(|X^{i,N}_{t}|^{q}\right)\leq C_{q,T,L,H,\xi}\left(1+\boldsymbol{E}|X^{i}_{0}|^{q}\right),
\end{equation*}
where $C_{q,T,L,H,\xi}$ is a positive constant dependent on $q,T,L,H,\xi$ but does not depend on $N$.
\end{theorem}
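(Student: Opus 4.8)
The plan is to mirror the proof of Theorem \ref{Xbound}, decomposing the integral equations \eqref{McKeanN} and \eqref{McKeanI} through the elementary inequality into three contributions: the initial datum, the drift integral, and the stochastic integral. The decisive simplification in the regime $H\in(0,1/2)$ is that $\sigma(\mu_s^{X,N})\equiv\xi$ is constant, so the stochastic integral collapses to $\int_0^t\xi\,\mathrm dB_s^{H,i}=\xi B_t^{H,i}$; consequently the moment estimate \eqref{zhongdian} (Theorem 1.1 of \cite{memin2001}), which requires $H>1/2$, is no longer invoked.

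First I would handle the drift term exactly as in \eqref{J1}, combining the H\"older inequality with the linear growth bound \eqref{tui1} to produce a constant plus $\int_0^t\E(|X_s^{i,N}|^q)\,\mathrm ds$ and $\int_0^t\E(\mathcal W_\theta^q(\mu_s^{X,N},\delta_0))\,\mathrm ds$. For the diffusion term I would exploit Gaussianity directly: since each component of $B_t^{H,i}$ is a centred Gaussian with variance $t^{2H}$, one has
\begin{equation*}
\E\left(\left|\xi B_t^{H,i}\right|^q\right)=|\xi|^q\,\E\left(\left|B_t^{H,i}\right|^q\right)\leq C_{q,d,H}\,|\xi|^q\,t^{qH}\leq C_{q,H,T,\xi},
\end{equation*}
uniformly in $t\in[0,T]$ and, crucially, independently of $N$. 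This replaces the role played by \eqref{J2} in the earlier theorem.

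I would then use \eqref{delta} with $\theta=2$ to write $\mathcal W_\theta^\theta(\mu_s^{X,N},\delta_0)=\frac1N\sum_{j=1}^N|X_s^{j,N}|^\theta$, and bound $\E((\frac1N\sum_j|X_s^{j,N}|^\theta)^{q/\theta})$ by the Minkowski inequality together with the exchangeability of the particles, exactly as in the step preceding the Gr\"onwall argument of Theorem \ref{Xbound}; this yields $\E(\mathcal W_\theta^q(\mu_s^{X,N},\delta_0))\leq\E(|X_s^{i,N}|^q)$. Assembling the three contributions leaves an inequality of the form $\E(|X_t^{i,N}|^q)\leq C_{q,T,L,H,\xi}(1+\E|X_0^i|^q)+C\int_0^t\E(|X_s^{i,N}|^q)\,\mathrm ds$, and the Gr\"onwall inequality closes the estimate. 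The bound for the McKean--Vlasov solution $X_t^i$ follows verbatim, with the empirical measure replaced by the law $\mathcal L_{X_s^i}$ and $\mathcal W_\theta(\mathcal L_{X_s^i},\delta_0)=(\E|X_s^i|^\theta)^{1/\theta}$ in place of \eqref{delta}.

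I do not expect a genuine obstacle here: the case $H<1/2$ is in fact easier than $H>1/2$ precisely because the constancy of $\sigma$ trivialises the stochastic integral. The only point demanding care is verifying that every constant is $N$-independent, which is automatic, since the Gaussian moment bound on the diffusion term carries no $N$-dependence and the Minkowski/exchangeability reduction collapses the empirical average back to a single-particle moment.
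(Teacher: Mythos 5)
Your proposal is correct and follows essentially the same route as the paper: the same three-term decomposition, the same treatment of the drift via \eqref{J1} and of the empirical Wasserstein term via \eqref{delta} and Minkowski, and Gr\"onwall to close. The only cosmetic difference is in the stochastic integral, where the paper invokes an external moment bound to get $C_q\xi^qT^{qH}$ while you obtain the same estimate directly from the Gaussianity of $\xi B_t^{H,i}$; these are equivalent.
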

\begin{proof}
From \eqref{McKeanI}, \eqref{J1} and Theorem 2.1 in \cite{yaskov2019}, we get
\begin{equation*}
\begin{split}
\boldsymbol{E}\left(|X^{i,N}_{t}|^{q}\right)\leq& 3^{q-1}\boldsymbol{E}\left(|X^{i}_{0}|^{q}\right)+3^{q-1}\boldsymbol{E}\left(\left|\int_{0}^{t}b\left(X^{i,N}_{s},\mu_{s}^{X,N}\right)\mathrm{d}s\right|^{2}\right)
+3^{q-1}\boldsymbol{E}\left(\left|\int_{0}^{t}\xi\mathrm{d}B^{H,i}_{s}\right|^{q}\right)
\\\leq&C_{q}\boldsymbol{E}\left(|X^{i}_{0}|^{2}\right)+C_{q,T,L}\int_{0}^{t}\boldsymbol{E}\left(|X^{i,N}_{s}|^{2}\right)\mathrm{~d}s
+C_{q,T,L}\int_{0}^{t}\boldsymbol{E}\left(\mathcal{W}_{2}^{q}(\mu_{s}^{X,N},\delta_{0} )\right)\mathrm{d}s+C_{q}\xi^{q} T^{qH}
\\\leq& C_{q,T,L,H,\xi}\left(1+\boldsymbol{E}|X^{i}_{0}|^{q}\right)+C_{T,L}\int_{0}^{t}\boldsymbol{E}\left(|X^{i,N}_{s}|^{q}\right)\mathrm{~d}s.
\end{split}
\end{equation*}
 Then applying the Gr\"{o}nwall inequality, we obtain
 \begin{equation*}
\boldsymbol{E}\left(|X^{i,N}_{t}|^{q}\right)\leq C_{q,T,L,H,\xi}\left(1+\boldsymbol{E}|X^{i}_{0}|^{q}\right).
\end{equation*}
%as required.
 Similarly, we can show
 \begin{equation*}
\boldsymbol{E}\left(|X^{i}_{t}|^{q}\right)\leq C_{q,T,L,H,\xi}\left(1+\boldsymbol{E}|X^{i}_{0}|^{q}\right).
\end{equation*}
The proof is complete.\eproof
\end{proof}

\begin{theorem}
[Propagation of Chaos]\label{pc1}
Let Assumption \ref{assp} be satisfied. If for some $q>2$, then it holds that
\begin{equation*}
\begin{split}
\sup _{i \in\{1, \ldots, N]} \sup _{t \in[0, T]} \boldsymbol{E}\left|X_{t}^{i}-X_{t}^{i,N}\right|^{2} &\leq C_{T,H,L}\\&
\times
\left\{\begin{array}{ll}
N^{-1 / 2}+N^{-(q-2)/q}, & \text { if } d<4,\\
N^{-1 / 2} \log(1+N)+N^{-(q-2)/q}, & \text { if } d=4,\\
N^{-2 / d}+N^{-(q-2)/q}, & \text { if }d>4,
\end{array}\right.
\end{split}
\end{equation*}
where the constant $C_{T,H,L}> 0$ depends on $T$, $H$ and $L$ but does not depend on $N$.
\end{theorem}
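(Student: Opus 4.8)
The plan is to exploit the structural simplification that is special to the regime $H\in(0,1/2)$: here $\sigma$ is independent of the measure argument, so $\sigma(\mathcal{L}_{X_s^i})=\xi=\sigma(\mu_s^{X,N})$, and the two fractional stochastic integrals in \eqref{McKeanN} and \eqref{McKeanI} are identical. Subtracting the two equations therefore \emph{eliminates the noise term entirely}, leaving
\begin{equation*}
X_t^i-X_t^{i,N}=\int_0^t\Bigl(b\bigl(X_s^i,\mathcal{L}_{X_s^i}\bigr)-b\bigl(X_s^{i,N},\mu_s^{X,N}\bigr)\Bigr)\,\mathrm{d}s .
\end{equation*}
This is exactly the feature that makes the low-Hurst case accessible, since it avoids having to estimate a fractional Wiener integral when $H<1/2$ (where the Young/Mémin type bounds used for $H>1/2$ are unavailable). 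From here I would mirror the drift estimate in the proof of Theorem \ref{pc}: apply Hölder's inequality in time, then the Lipschitz bound of Assumption \ref{assp} with $\theta=2$, to obtain
\begin{equation*}
\E\left(\left|X_t^i-X_t^{i,N}\right|^2\right)\le C_{T,L}\int_0^t\E\left(\left|X_s^i-X_s^{i,N}\right|^2\right)\mathrm{d}s+C_{T,L}\int_0^t\E\left(\mathcal{W}_2^2\bigl(\mathcal{L}_{X_s^i},\mu_s^{X,N}\bigr)\right)\mathrm{d}s .
\end{equation*}

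Next I would control the Wasserstein term by inserting the empirical measure $\mu_s^{X}:=\frac1N\sum_{j=1}^N\delta_{X_s^j}$ of the true McKean--Vlasov particles and splitting via the triangle inequality,
\begin{equation*}
\mathcal{W}_2^2\bigl(\mathcal{L}_{X_s^i},\mu_s^{X,N}\bigr)\le 2\,\mathcal{W}_2^2\bigl(\mu_s^{X},\mu_s^{X,N}\bigr)+2\,\mathcal{W}_2^2\bigl(\mathcal{L}_{X_s^i},\mu_s^{X}\bigr).
\end{equation*}
For the first piece, Lemma \ref{lem1} gives $\E\,\mathcal{W}_2^2(\mu_s^{X},\mu_s^{X,N})\le \frac1N\sum_{j=1}^N\E|X_s^j-X_s^{j,N}|^2=\E|X_s^i-X_s^{i,N}|^2$, where the identical distribution of the particles is used; this piece is absorbed into the Grönwall term. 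The second piece is the genuine statistical error, measuring how fast the empirical measure of i.i.d.\ samples approaches the true law.

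The most delicate step is this second piece, and I would handle it exactly as in Theorem \ref{pc} by invoking the quantitative Wasserstein convergence estimate of \cite[Theorem 1]{fournier2015}, which yields the three dimension-dependent regimes ($d<4$, $d=4$, $d>4$) together with the moment-truncation factor $N^{-(q-2)/q}$ coming from the $q$-th moment $M_q(\mathcal{L}_{X_s^i})$. The uniform-in-time bound $M_q(\mathcal{L}_{X_s^i})\le C$ needed to apply that estimate is supplied by Theorem \ref{Xbound1} (valid for any $q>2$). Substituting both pieces back, the inequality has the form $\E|X_t^i-X_t^{i,N}|^2\le C\int_0^t\E|X_s^i-X_s^{i,N}|^2\,\mathrm{d}s+R_N$, where $R_N$ is the stated dimension-dependent bound; a final application of Grönwall's inequality, followed by taking the supremum over $i$ and $t\in[0,T]$, completes the proof. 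Compared with the $H>1/2$ case the argument is actually shorter, precisely because the vanishing of the noise term removes the need for the fractional-integral moment estimate; the only real work is importing the Fournier--Guillin rates and verifying the moment hypothesis through Theorem \ref{Xbound1}.
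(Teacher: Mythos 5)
Your proposal is correct and follows essentially the same route as the paper's own proof in Appendix \ref{pcc1}: cancellation of the noise term since $\sigma$ is constant, the drift estimate via H\"older and the Lipschitz condition, the triangle-inequality splitting of the Wasserstein term through $\mu_s^{X}$ with Lemma \ref{lem1}, the Fournier--Guillin rates backed by the moment bound of Theorem \ref{Xbound1}, and a final Gr\"onwall argument. The only difference is cosmetic: you spell out the decomposition $\mathcal{W}_2^2(\mathcal{L}_{X_s^i},\mu_s^{X,N})\le 2\mathcal{W}_2^2(\mu_s^{X},\mu_s^{X,N})+2\mathcal{W}_2^2(\mathcal{L}_{X_s^i},\mu_s^{X})$ explicitly, whereas the paper compresses it by referring back to the proof of Theorem \ref{pc}.
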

The proof of this lemma is similar to that of Theorem \ref{pc}, we put it in the Appendix \ref{pcc1}.

\subsubsection{EM Method for Interacting Particle System}

The numerical solutions are generated by the EM method
\begin{equation}\label{McKeanNumerical1}
Y_{t_{k+1}}^{i,N}=Y_{t_{k}}^{i,N}+b\left(Y_{t_{k}}^{i,N}, \mu_{t_{k}}^{Y,N}\right) \Delta+ \xi \Delta B^{H,i}_{t_{k}},
\end{equation}
and the continuous extension of the EM method defined by
\begin{equation}\label{INn1}
 Y_{t}^{i, N}=\bar{Y}_{t_{k}}^{i,N}+\int_{t_{k}}^{t} b\left(\bar{Y}_{s}^{i,N}, \bar{\mu}_{s}^{Y,N}\right) \mathrm{~d}s+\int_{t_{k}}^{t} \xi\mathrm{~d} B_{s}^{H,i}.
\end{equation}
From \eqref{INn1}, for all $t\in[0,T]$, we have
\begin{equation}\label{INn01}
Y_{t}^{i, N}=X_{0}^{i}+\int_{0}^{t} b\left(\bar{Y}_{s}^{i,N}, \bar{\mu}_{s}^{Y,N}\right) \mathrm{~d}s+\int_{0}^{t} \xi\mathrm{~d} B_{s}^{H,i}.
\end{equation}
\begin{theorem}\label{ybound1}
Let Assumption \ref{assp} holds,  then
 \begin{equation*}
 \sup_{t\in[0,T]}\boldsymbol{E}\left(|Y^{i,N}_{t}|^{2}\right)\leq C_{T,L,H,\xi}\left(1+\boldsymbol{E}|X^{i}_{0}|^{2}\right),
\end{equation*}
where $C_{T,L,H,\xi}$ is a positive constant dependent on $T,L,H,\xi$ but independent of $\Delta$.
\end{theorem}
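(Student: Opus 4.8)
The plan is to mirror the proof of Theorem \ref{ybound}, but to exploit the special structure of the case $H\in(0,1/2)$, where the diffusion coefficient is the constant $\xi$ so that the stochastic integral degenerates to a scaled fractional Brownian motion and no Mémin--Mishura--Valkeila-type moment estimate is required. Starting from the closed form \eqref{INn01}, I would first apply the elementary inequality $|a+b+c|^{2}\le 3(|a|^{2}+|b|^{2}+|c|^{2})$ to obtain the splitting
\begin{equation*}
\boldsymbol{E}\left(|Y_{t}^{i,N}|^{2}\right)\le 3\boldsymbol{E}\left(|X_{0}^{i}|^{2}\right)+3\boldsymbol{E}\left(\left|\int_{0}^{t} b\left(\bar{Y}_{s}^{i,N},\bar{\mu}_{s}^{Y,N}\right)\mathrm{d}s\right|^{2}\right)+3\boldsymbol{E}\left(\left|\int_{0}^{t} \xi\,\mathrm{d}B_{s}^{H,i}\right|^{2}\right).
\end{equation*}

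For the drift term I would apply the H\"older inequality in time followed by the linear growth bound \eqref{tui1} (with $\theta=2$), producing the three contributions $C_{T,L}$, $C_{T,L}\int_{0}^{t}\boldsymbol{E}(|\bar{Y}_{s}^{i,N}|^{2})\,\mathrm{d}s$, and $C_{T,L}\int_{0}^{t}\boldsymbol{E}(\mathcal{W}_{2}^{2}(\bar{\mu}_{s}^{Y,N},\delta_{0}))\,\mathrm{d}s$. The noise term is where this case simplifies decisively: since $\xi$ is constant, $\int_{0}^{t}\xi\,\mathrm{d}B_{s}^{H,i}=\xi B_{t}^{H,i}$, so $\boldsymbol{E}(|\int_{0}^{t}\xi\,\mathrm{d}B_{s}^{H,i}|^{2})=\xi^{2}\boldsymbol{E}|B_{t}^{H,i}|^{2}\le C_{H}\xi^{2}T^{2H}$ directly from the covariance $R_{H}$ of the fBm, bypassing the $H>1/2$ estimate \eqref{zhongdian} entirely.

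Next, to close the recursion, I would rewrite the Wasserstein contribution exactly as in \eqref{Wp}: by the transport-plan bound of Lemma \ref{lem1}, the identity $\mathcal{W}_{2}^{2}(\bar{\mu}_{s}^{Y,N},\delta_{0})=\frac{1}{N}\sum_{j=1}^{N}|\bar{Y}_{s}^{j,N}|^{2}$, the Minkowski inequality, and the fact that the particles are identically distributed, one arrives at $\boldsymbol{E}(\mathcal{W}_{2}^{2}(\bar{\mu}_{s}^{Y,N},\delta_{0}))=\boldsymbol{E}(|\bar{Y}_{s}^{i,N}|^{2})$. Combining the three bounds gives
\begin{equation*}
\boldsymbol{E}\left(|Y_{t}^{i,N}|^{2}\right)\le C_{T,L,H,\xi}\left(1+\boldsymbol{E}|X_{0}^{i}|^{2}\right)+C_{T,L}\int_{0}^{t}\boldsymbol{E}\left(|\bar{Y}_{s}^{i,N}|^{2}\right)\mathrm{d}s.
\end{equation*}

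Finally, using that $\bar{Y}$ is the piecewise-constant interpolation so that $\boldsymbol{E}(|\bar{Y}_{s}^{i,N}|^{2})\le\sup_{0\le\tau\le s}\boldsymbol{E}(|Y_{\tau}^{i,N}|^{2})$, I would take the supremum over $t$ on both sides and apply the Gr\"onwall inequality to conclude the stated bound with a constant independent of $\Delta$. Because $\xi$ is constant, there is no genuine analytic obstacle here; the only point requiring care is performing the passage from $\bar{Y}$ to the supremum of the continuous solution \emph{before} invoking Gr\"onwall, which is precisely what keeps the final constant $C_{T,L,H,\xi}$ independent of the step size.
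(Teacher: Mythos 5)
Your proposal is correct and follows essentially the same route as the paper's own proof: the same three-term splitting of \eqref{INn01}, the H\"older-plus-linear-growth bound on the drift, the direct computation $\boldsymbol{E}\left(\left|\int_{0}^{t}\xi\,\mathrm{d}B_{s}^{H,i}\right|^{2}\right)=\xi^{2}t^{2H}$ for the constant diffusion, the reduction of the Wasserstein term to $\boldsymbol{E}\left(\left|\bar{Y}_{s}^{i,N}\right|^{2}\right)$ via Lemma \ref{lem1} and Minkowski, and the passage to $\sup_{0\le\tau\le s}$ before Gr\"onwall. No substantive differences.
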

\begin{proof}
By \eqref{J1} and \eqref{INn01}, we can show
\begin{equation*}
\begin{split}
\boldsymbol{E}\left(|Y^{i,N}_{t}|^{2}\right)\leq& 3\boldsymbol{E}\left(|X^{i}_{0}|^{2}\right)+3\boldsymbol{E}\left(\left|\int_{0}^{t}b\left(\bar{Y}^{i,N}_{s},\bar{\mu}_{s}^{Y,N}\right)\mathrm{d}s\right|^{2}\right)
+3\boldsymbol{E}\left(\left|\int_{0}^{t}\xi\mathrm{d}B^{H,i}_{s}\right|^{2}\right)
\\\leq&3\boldsymbol{E}\left(|X^{i}_{0}|^{2}\right)+C_{T,L}\int_{0}^{t}\boldsymbol{E}\left(|\bar{Y}^{i,N}_{s}|^{2}\right)\mathrm{~d}s
+C_{T,L}\int_{0}^{t}\boldsymbol{E}\left(\mathcal{W}_{2}^{2}(\bar{\mu}_{s}^{Y,N},\delta_{0} )\right)\mathrm{d}s+3\xi^{2} t^{2H}
\\\leq& C_{T,L,H,\xi}\left(1+\boldsymbol{E}|X^{i}_{0}|^{2}\right)+C_{T,L}\int_{0}^{t}\boldsymbol{E}\left(|\bar{Y}^{i,N}_{s}|^{2}\right)\mathrm{~d}s
\\\leq& C_{T,L,H,\xi}\left(1+\boldsymbol{E}|X^{i}_{0}|^{2}\right)
+C_{T,L}\int_{0}^{t}\sup_{0\leq\tau\leq s}\boldsymbol{E}\left(|Y^{i,N}_{s}|^{2}\right)\mathrm{~d}s
.
\end{split}
\end{equation*}
Therefore, for $0\leq t\leq T$, we have
\begin{equation*}
  \sup_{0\leq\tau\leq t}\boldsymbol{E}\left(|Y^{i,N}_{\tau}|^{2}\right)\leq C_{T,L,H,\xi}\left(1+\boldsymbol{E}|X^{i}_{0}|^{2}\right)
+C_{T,L}\int_{0}^{t}\sup_{0\leq\tau\leq s}\boldsymbol{E}\left(|Y^{i,N}_{s}|^{2}\right)\mathrm{~d}s
.
\end{equation*}
By the Gr\"{o}nwall inequality, the assertion holds.\eproof
\end{proof}

\begin{lemma}\label{XY1}
Assume \eqref{tui1} and \eqref{tui2} hold, then
\begin{equation}\label{xyy1}
\boldsymbol{E}\left(\left|Y^{i,N}_{t}-\bar{Y}^{i,N}_{t}\right|^{2}\right)\leq C_{L}\Delta^{2H},
\end{equation}
where $C_{L}$ is a positive constant dependent on $L$ but independent of $\Delta$.
\end{lemma}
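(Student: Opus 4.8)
The plan is to follow the same skeleton as the proof of Lemma \ref{XY}, but to exploit that for $H\in(0,1/2)$ the diffusion coefficient is the \emph{constant} $\xi$, which completely removes the delicate fractional-calculus machinery (the stochastic Fubini theorem and the $\kappa$-splitting estimate) that was needed in the case $H>1/2$. Starting from the continuous extension \eqref{INn1}, for $t_k\le t<t_{k+1}$ I would write
\begin{equation*}
Y^{i,N}_t-\bar Y^{i,N}_t=\int_{t_k}^t b\left(\bar Y_s^{i,N},\bar\mu_s^{Y,N}\right)\mathrm{d}s+\xi\left(B^{H,i}_t-B^{H,i}_{t_k}\right),
\end{equation*}
and then split the second moment by the elementary inequality into a drift part and a noise part,
\begin{equation*}
\boldsymbol{E}\left(\left|Y^{i,N}_t-\bar Y^{i,N}_t\right|^2\right)\le 2\,\boldsymbol{E}\left(\left|\int_{t_k}^t b\left(\bar Y_s^{i,N},\bar\mu_s^{Y,N}\right)\mathrm{d}s\right|^2\right)+2\,\boldsymbol{E}\left(\left|\xi\left(B^{H,i}_t-B^{H,i}_{t_k}\right)\right|^2\right).
\end{equation*}

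For the drift part I would apply the H\"older (Cauchy--Schwarz) inequality to extract a factor $\Delta=t-t_k$, then invoke the linear-growth bound \eqref{tui1}, which yields a bound of the form $C_L\,\Delta\int_{t_k}^t\boldsymbol{E}\bigl(1+|\bar Y_s^{i,N}|+\mathcal{W}_\theta(\bar\mu_s^{Y,N},\delta_0)\bigr)^2\,\mathrm{d}s$. Using the empirical-measure identity \eqref{Wp} (with $p=\theta=2$) to replace $\boldsymbol{E}(\mathcal{W}_2^2(\bar\mu_s^{Y,N},\delta_0))$ by $\boldsymbol{E}|\bar Y_s^{i,N}|^2$, and then the uniform second-moment bound of Theorem \ref{ybound1} (which also controls $\bar Y$, since it is just $Y$ restricted to grid points), this entire term is dominated by $C_L\Delta^2$.

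For the noise part the constancy of $\sigma$ pays off immediately: the increment of a $d$-dimensional fBm has the explicit second moment $\boldsymbol{E}|B^{H,i}_t-B^{H,i}_{t_k}|^2=d\,|t-t_k|^{2H}\le d\,\Delta^{2H}$, so that $\boldsymbol{E}(|\xi(B^{H,i}_t-B^{H,i}_{t_k})|^2)$ is bounded by a constant multiple of $\Delta^{2H}$ with no further estimation required. This is exactly the step that demanded the Fubini/$\kappa$ argument when $\sigma$ depended on the law, and which is why the paper restricts to constant $\xi$ for $H<1/2$.

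Finally, since $H<1/2$ forces $2H<2$, for $\Delta\le 1$ we have $\Delta^2\le\Delta^{2H}$, so the drift contribution is absorbed into the noise contribution and
\begin{equation*}
\boldsymbol{E}\left(\left|Y^{i,N}_t-\bar Y^{i,N}_t\right|^2\right)\le C_L\Delta^{2H},
\end{equation*}
as claimed. I do not expect a genuine obstacle here; the only point needing care is the routine bookkeeping that converts the Wasserstein-to-$\delta_0$ distance into a moment via \eqref{Wp} and the appeal to the already-established moment bound in Theorem \ref{ybound1}, after which the result is immediate.
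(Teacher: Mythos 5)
Your proposal is correct and follows essentially the same route as the paper: split off the drift via the elementary and H\"older inequalities together with \eqref{tui1}, \eqref{Wp} and Theorem \ref{ybound1} to get an $O(\Delta^2)$ contribution, and bound the noise term directly through $\boldsymbol{E}\left(\left|B^{H}_{t}-B^{H}_{t_k}\right|^{2}\right)=|t-t_k|^{2H}$, then absorb $\Delta^{2}$ into $\Delta^{2H}$. Your write-up is in fact slightly cleaner, since the paper's displayed chain carries over some redundant $\Delta^{2H-1}\int\mathcal{W}_{2}^{2}$ terms inherited from the $H>1/2$ argument that play no role once $\sigma$ is the constant $\xi$.
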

\begin{proof}
From \eqref{INn1}, we get
\begin{equation}\label{PP1}
\begin{split}
 \boldsymbol{E}\left(\left|Y^{i,N}_{t}-\bar{Y}^{i,N}_{t}\right|^{2}\right)=& \boldsymbol{E}\left(\left|\int_{t_{k}}^{t} b\left(\bar{Y}_{s}^{i,N}, \bar{\mu}_{s}^{Y,N}\right) \mathrm{~d}s+\int_{t_{k}}^{t} \xi\mathrm{~d} B_{s}^{H,i}\right|^{2}\right)
 \\\leq&2\boldsymbol{E}\left(\left|\int_{t_{k}}^{t} b\left(\bar{Y}_{s}^{i,N}, \bar{\mu}_{s}^{Y,N}\right) \mathrm{~d}s\right|^{2}\right)+2\boldsymbol{E}\left(\left|\int_{t_{k}}^{t} \xi\mathrm{~d} B_{s}^{H,i}\right|^{2}\right).
\end{split}
\end{equation}
By \eqref{P1} and the fact that $\boldsymbol{E}\left(\left|B^{H}_{t}-B^{H}_{s}\right|^{2}\right)=|t-s|^{2 H}$, we obtain
\begin{equation*}
\begin{split}
 \boldsymbol{E}\left(\left|Y^{i,N}_{t}-\bar{Y}^{i,N}_{t}\right|^{2}\right)\leq&C_{L}\Delta^{2}
 +C_{L}\Delta\int_{t_{k}}^{t}\boldsymbol{E}\left(\left|\bar{Y}_{s}^{i,N}\right|^{2}\right)\mathrm{~d}s
+C_{L}\Delta\int_{t_{k}}^{t}\boldsymbol{E}\left(\mathcal{W}_{2}^{2}\left(\bar{\mu}_{s}^{Y,N},\delta_{0}\right)\right)\mathrm{~d}s
\\&+C_{L}\Delta^{2H}
+C_{L}\Delta^{2H-1}\int_{t_{k}}^{t}\boldsymbol{E}
\left(\mathcal{W}_{2}^{2}\left(\bar{\mu}_{s}^{Y,N},\delta_{0}\right)\right)\mathrm{~d}s.
\\\leq&C_{L}\Delta^{2}
 +C_{L}\Delta\int_{t_{k}}^{t}\boldsymbol{E}\left(\left|\bar{Y}_{s}^{i,N}\right|^{2}\right)\mathrm{~d}s
+C_{L}\Delta\int_{t_{k}}^{t}\boldsymbol{E}\left(\left|\bar{Y}_{s}^{i,N}\right|^{2}\right)\mathrm{~d}s
\\&+C_{L}\Delta^{2H}
+C_{L}\Delta^{2H-1}\int_{t_{k}}^{t}\boldsymbol{E}\left(\left|\bar{Y}_{s}^{i,N}\right|^{2}\right)\mathrm{~d}s
\\\leq&C_{L}\Delta^{2H}
+C_{L}\Delta^{2H-1}\int_{t_{k}}^{t}\boldsymbol{E}\left(\left|\bar{Y}_{s}^{i,N}\right|^{2}\right)\mathrm{~d}s.
\end{split}
\end{equation*}
Applying Theorem \ref{ybound1} to above inequality and then getting the desired result.\eproof
\end{proof}

\begin{theorem}\label{thm1}
Let Assumption \ref{assp} holds, then
\begin{equation*}
\boldsymbol{E}\left(\left|X^{i,N}_{t}-Y^{i,N}_{t}\right|^{2}\right)\leq C_{T,L}\Delta^{2H},
\end{equation*}
where $C_{T,L}$ is a positive constant independent of $\Delta$.
\end{theorem}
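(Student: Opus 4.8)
The plan is to mirror the proof of Theorem \ref{thm}, exploiting the key simplification that in the case $H<1/2$ the diffusion coefficient is the constant $\xi$. Subtracting \eqref{INn01} from \eqref{McKeanI} (with $\sigma(\mu_s^{X,N})=\xi$), the two Wiener integrals $\int_0^t \xi\,\mathrm{d}B_s^{H,i}$ cancel exactly, so the difference reduces to a pure drift term,
\begin{equation*}
X_t^{i,N}-Y_t^{i,N}=\int_0^t\left(b\left(X_s^{i,N},\mu_s^{X,N}\right)-b\left(\bar{Y}_s^{i,N},\bar{\mu}_s^{Y,N}\right)\right)\mathrm{~d}s.
\end{equation*}
This removes the delicate fractional-integral moment estimate that was needed in \eqref{BH}, so no Wiener-integral bound is required here.

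First I would apply the Cauchy--Schwarz (H\"older) inequality in time to bring the square inside the integral, followed by the Lipschitz bound in Assumption \ref{assp} with $\theta=2$ and an elementary inequality, obtaining
\begin{equation*}
\boldsymbol{E}\left(\left|X_t^{i,N}-Y_t^{i,N}\right|^2\right)\leq C_{T,L}\int_0^t\boldsymbol{E}\left(\left|X_s^{i,N}-\bar{Y}_s^{i,N}\right|^2\right)\mathrm{~d}s+C_{T,L}\int_0^t\boldsymbol{E}\left(\mathcal{W}_2^2\left(\mu_s^{X,N},\bar{\mu}_s^{Y,N}\right)\right)\mathrm{~d}s.
\end{equation*}
Then, exactly as in \eqref{Wpp}, I would invoke Lemma \ref{lem1} together with the Minkowski inequality and the fact that the particles are identically distributed to replace the empirical Wasserstein term by $\boldsymbol{E}(|X_s^{i,N}-\bar{Y}_s^{i,N}|^2)$, so that the entire right-hand side is controlled by $C_{T,L}\int_0^t\boldsymbol{E}(|X_s^{i,N}-\bar{Y}_s^{i,N}|^2)\,\mathrm{d}s$.

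Next I would reintroduce $Y_s^{i,N}$ via the elementary inequality $|X_s^{i,N}-\bar{Y}_s^{i,N}|^2\leq 2|X_s^{i,N}-Y_s^{i,N}|^2+2|Y_s^{i,N}-\bar{Y}_s^{i,N}|^2$ and apply Lemma \ref{XY1} to bound $\boldsymbol{E}(|Y_s^{i,N}-\bar{Y}_s^{i,N}|^2)\leq C_L\Delta^{2H}$, which yields
\begin{equation*}
\boldsymbol{E}\left(\left|X_t^{i,N}-Y_t^{i,N}\right|^2\right)\leq C_{T,L}\int_0^t\boldsymbol{E}\left(\left|X_s^{i,N}-Y_s^{i,N}\right|^2\right)\mathrm{~d}s+C_{T,L}\Delta^{2H}.
\end{equation*}
A final application of the Gr\"onwall inequality then gives the claim. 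There is no genuine obstacle in this case: because the diffusion is additive with a constant coefficient, the entire difficulty of the $H>1/2$ analysis evaporates, and the only inputs are the Lipschitz property in Assumption \ref{assp}, the coupling estimate Lemma \ref{lem1}, and the one-step bound Lemma \ref{XY1}.
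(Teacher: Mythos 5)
Your proposal is correct and follows essentially the same route as the paper: the cancellation of the constant-coefficient Wiener integrals, the H\"older--Lipschitz drift bound as in \eqref{HH}, the reduction of the empirical Wasserstein term via \eqref{Wpp}, the splitting through $Y_s^{i,N}$ with Lemma \ref{XY1}, and a final Gr\"onwall argument. No meaningful differences from the paper's own proof.
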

\begin{proof}
 It follows from \eqref{McKeanI} and \eqref{INn01} that
\begin{equation*}
%\begin{split}
\boldsymbol{E}\left(\left|X^{i,N}_{t}-Y^{i,N}_{t}\right|^{2}\right)=\boldsymbol{E}\left(\left|\int_{0}^{t}\left(b\left(X_{s}^{i,N}, \mu_{s}^{X,N}\right) -b\left(\bar{Y}_{s}^{i,N}, \bar{\mu}_{s}^{Y,N}\right) \right)\mathrm{~d}s\right|^{p}\right).
%\end{split}
\end{equation*}
By \eqref{HH}, we have
\begin{equation*}
\begin{split}
\boldsymbol{E}\left(\left|X^{i,N}_{t}-Y^{i,N}_{t}\right|^{2}\right)
\leq&C_{T,L}\boldsymbol{E}\left(\int_{0}^{t}\left|X_{s}^{i,N}-\bar{Y}_{s}^{i,N}\right|^{2}\mathrm{~d}s\right)
+C_{T,L}\boldsymbol{E}\left(\int_{0}^{t}\mathcal{W}_{2}^{2}\left(\mu_{s}^{X,N},\bar{\mu}_{s}^{Y,N}\right)\mathrm{~d}s\right).
\end{split}
\end{equation*}
By \eqref{Wpp} and Lemma \ref{XY1}, we get
\begin{equation*}
\begin{split}
\boldsymbol{E}\left(\left|X^{i,N}_{t}-Y^{i,N}_{t}\right|^{2}\right)
\leq&C_{T,L}\int_{0}^{t}\boldsymbol{E}\left(\left|X_{s}^{i,N}-\bar{Y}_{s}^{i,N}\right|^{2}\right)\mathrm{~d}s
\\\leq&C_{T,L}\int_{0}^{t}\boldsymbol{E}\left(\left|X_{s}^{i,N}-Y_{s}^{i,N}\right|^{2}\right)\mathrm{~d}s
+C_{T,L}\int_{0}^{t}\boldsymbol{E}\left(\left|Y_{s}^{i,N}-\bar{Y}_{s}^{i,N}\right|^{2}\right)\mathrm{~d}s
\\\leq& C_{T,L}\int_{0}^{t}\boldsymbol{E}\left(\left|X_{s}^{i,N}-Y_{s}^{i,N}\right|^{2}\right)\mathrm{~d}s
+C_{T,L}\Delta^{2H}.
\end{split}
\end{equation*}
Then applying the Gr\"{o}nwall inequality, we obtain the
desired result.\eproof
\end{proof}

\begin{theorem}
Let Assumption \ref{assp} be satisfied. If $q>2$, then it holds that
\begin{equation*}
\begin{split}
\sup _{i \in\{1, \ldots, N\}} \sup _{t \in[0, T]} \boldsymbol{E}&\left(\left|X_{t}^{i}-Y_{t}^{i,N}\right|^{2}\right) \leq C_{T,H,L}\\&
\times
\left\{\begin{array}{ll}
N^{-1 / 2}+N^{-(q-2)/q}+\Delta^{2H}, & \text { if } d<4 \\
N^{-1 / 2} \log(1+N)+N^{-(q-2)/q}+\Delta^{2H}, & \text { if } d=4\\
N^{-2 / d}+N^{-(q-2)/q}+\Delta^{2H}, & \text { if }d>4,
\end{array}\right.
\end{split}
\end{equation*}
where the constant $C_{T,H,L}> 0$ does not depend on $N$ and $\Delta$.
\end{theorem}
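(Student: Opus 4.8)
The plan is to derive this estimate directly from the two error contributions already controlled in the $H<1/2$ analysis: the particle-system (propagation-of-chaos) error of Theorem \ref{pc1} and the time-discretization error of Theorem \ref{thm1}. First I would insert the intermediate quantity $X_t^{i,N}$ and combine the triangle inequality with the elementary bound $(a+b)^2\le 2a^2+2b^2$ to obtain
\begin{equation*}
\boldsymbol{E}\left(\left|X_t^i-Y_t^{i,N}\right|^2\right)\le 2\,\boldsymbol{E}\left(\left|X_t^i-X_t^{i,N}\right|^2\right)+2\,\boldsymbol{E}\left(\left|X_t^{i,N}-Y_t^{i,N}\right|^2\right).
\end{equation*}

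The first term on the right is exactly the propagation-of-chaos quantity. Taking $\sup_i\sup_t$ and invoking Theorem \ref{pc1} reproduces the three dimension-dependent rates, namely $N^{-1/2}+N^{-(q-2)/q}$ when $d<4$, $N^{-1/2}\log(1+N)+N^{-(q-2)/q}$ when $d=4$, and $N^{-2/d}+N^{-(q-2)/q}$ when $d>4$. The second term is the discretization error of the continuous EM extension \eqref{INn01}, which Theorem \ref{thm1} bounds by $C_{T,L}\Delta^{2H}$, uniformly in $t$. Adding the two bounds and absorbing all numerical constants into a single $C_{T,H,L}$ produces the stated trichotomy; the $\Delta^{2H}$ summand is common to all three cases because it originates entirely in Theorem \ref{thm1} and is insensitive to the dimension $d$, whereas the $d$-dependence enters only through the Wasserstein concentration estimate underlying Theorem \ref{pc1}. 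Since every constant appearing in Theorems \ref{pc1} and \ref{thm1} is independent of $N$ and $\Delta$, so is the resulting $C_{T,H,L}$.

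There is no substantial obstacle: the theorem is a triangle-inequality corollary of the two preceding results, precisely as noted in the remark following the corresponding $H>1/2$ statement. The only point deserving a word of care is the uniformity of the final bound in the particle index $i$. This holds because the interacting particle system \eqref{McKeanI}, its EM scheme \eqref{McKeanNumerical1}, and the independent driving noises $B^{H,i}$ are exchangeable in $i$; consequently the per-particle bounds of Theorems \ref{pc1} and \ref{thm1} hold with the same constant for every $i$ and survive the outer supremum $\sup_{i\in\{1,\dots,N\}}$ unchanged.
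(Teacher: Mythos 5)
Your proposal is correct and matches the paper exactly: the authors also obtain this theorem by inserting $X_t^{i,N}$, applying the triangle (elementary) inequality, and combining Theorem \ref{pc1} with Theorem \ref{thm1}. The paper states this in a single remark without writing out the details, so your version is simply a fuller account of the same argument.
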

We can easily get this theorem through trigonometric inequality, Theorem \ref{pc1} and Theorem \ref{thm1}.

\section{Numerical Example}\label{part4}
\begin{example}\label{example1}
Consider the following  McKean–Vlasov SDEs driven by fBm
\begin{equation}
\mathrm{~d}X_{t}=\left(X_{t}+\int_{\mathbb{R}}(X_{t}-y)\mu(\mathrm{d}y)\right)\mathrm{~d}t
+\left(\int_{\mathbb{R}}(X_{t}-y)\mu(\mathrm{d}y)\right)\mathrm{~d}B^{H}_{t},
\end{equation}
where initial value $X_{0}$ is a constant.
\end{example}

%\begin{figure}[htbp]
%  \centering
%  \includegraphics[width=8cm]{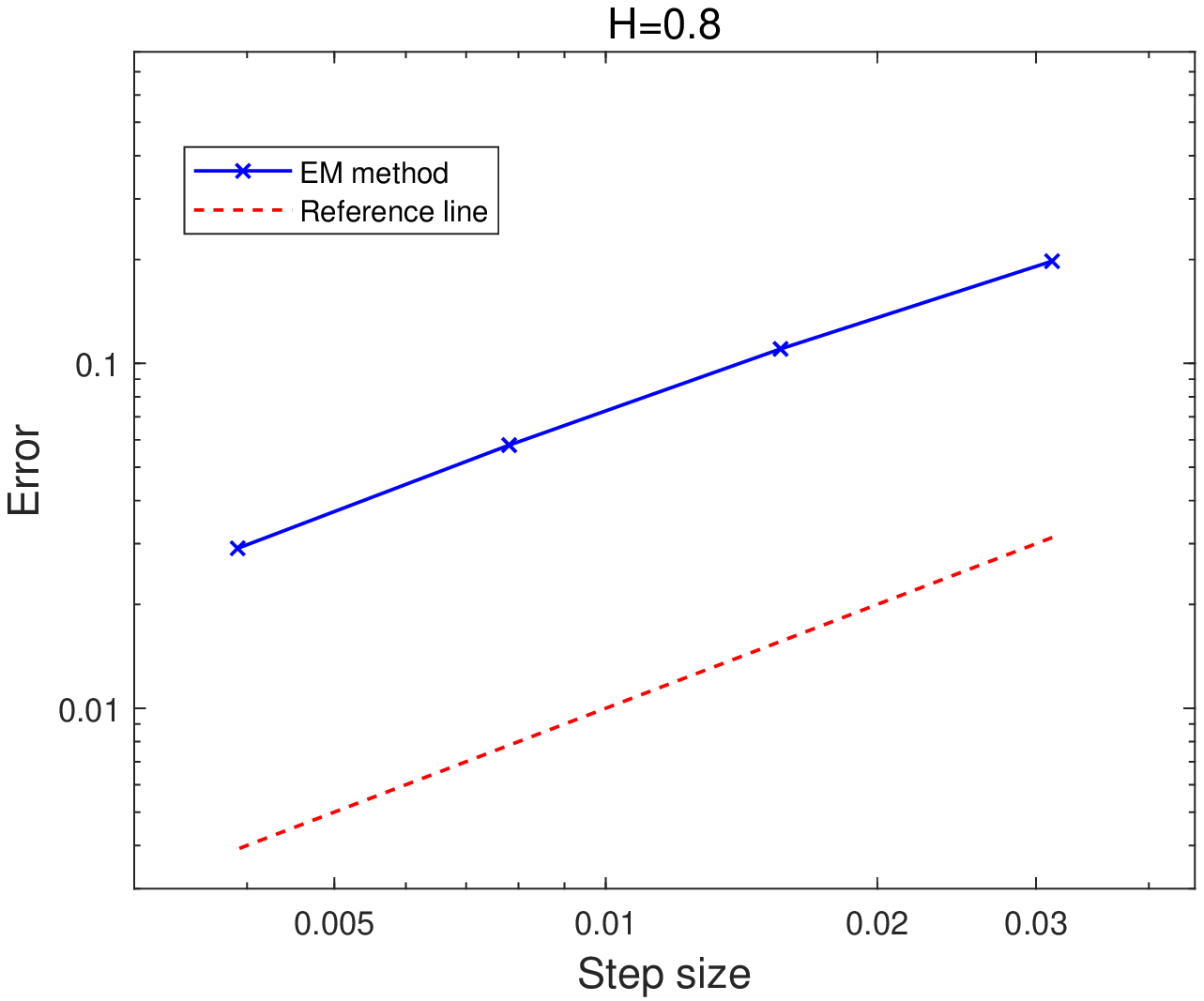}\\
%  \caption{Error in final solution at time $T=1$ as the mean step size decreases for the EM
%method applied to (\ref{example1})  with parameter set initial value $X_0 = 1$ and particle number $U=1000$.}\label{tu}
%\end{figure}

\begin{figure}[htbp]
 \centering
  \subfigure{\label{fig_c1}
 %\label{fig:subfig4:a2} %% label for second subfigure
 \includegraphics[width=5.5cm]{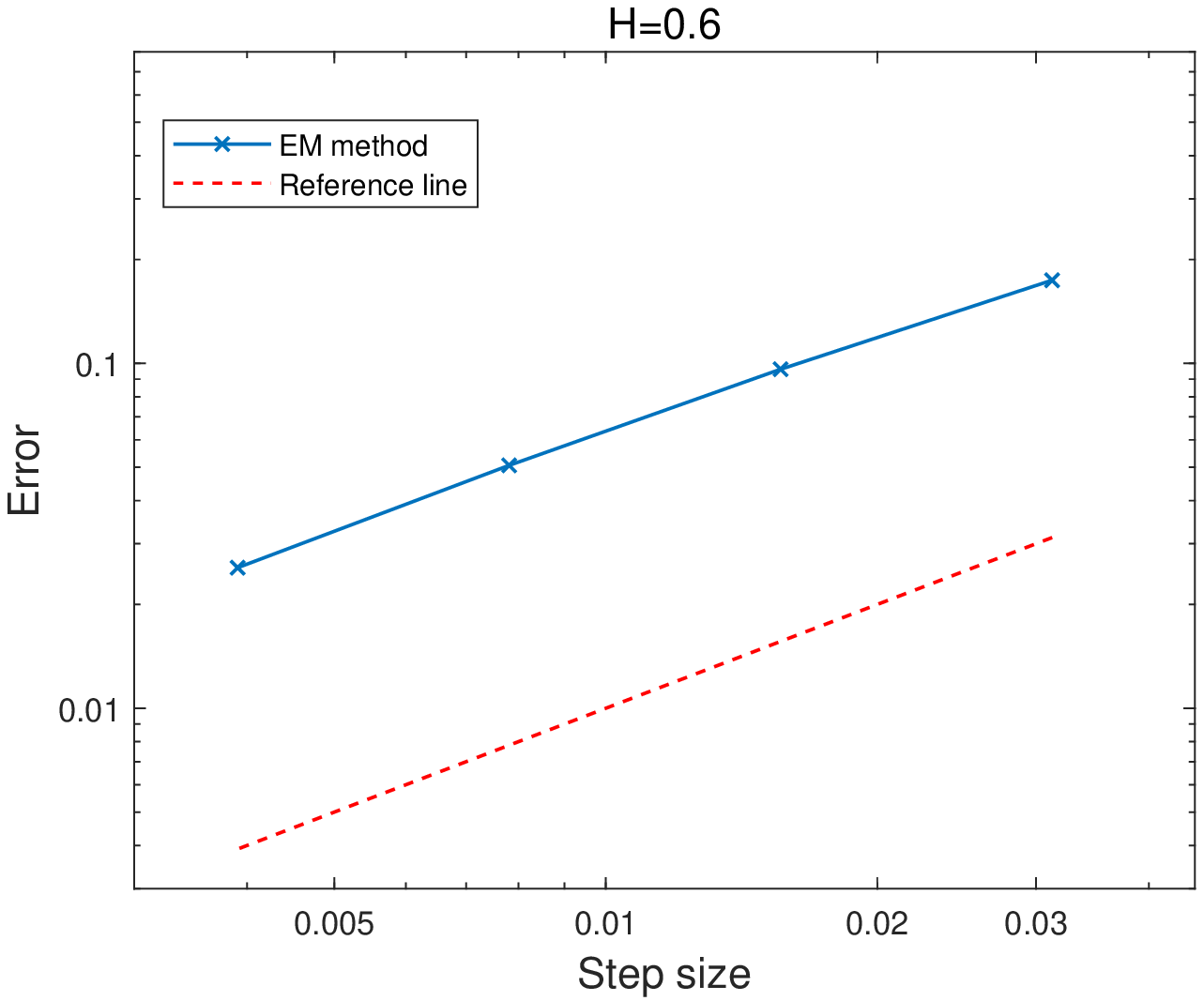}}
 \subfigure{\label{fig_c2}
 %\label{fig:subfig4:a2} %% label for second subfigure
 \includegraphics[width=5.5cm]{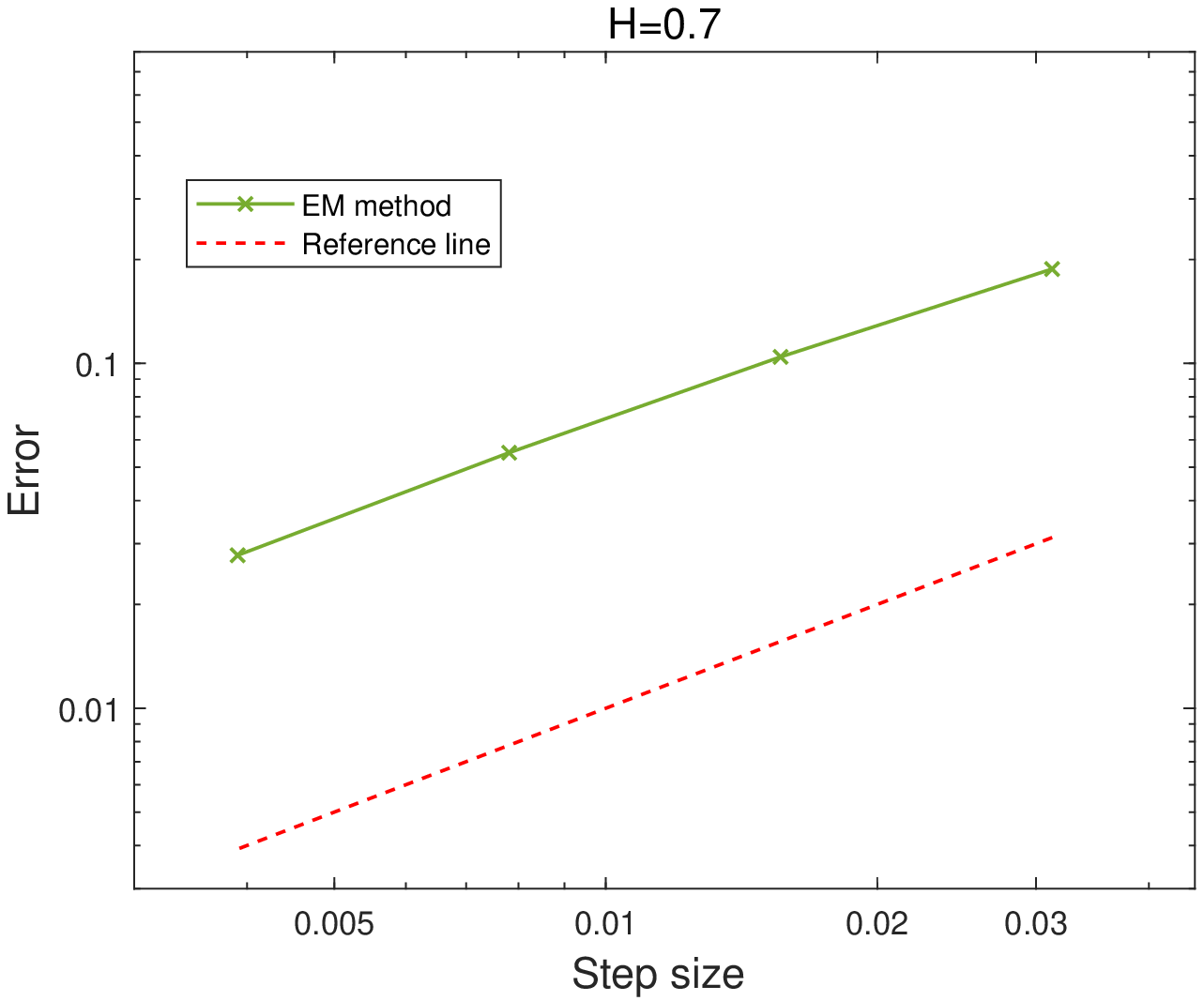}}
\subfigure{
 \label{fig_c3}
 \includegraphics[width=5.5cm]{convergence}}
 \subfigure{\label{fig_c4}
 %\label{fig:subfig4:a2} %% label for second subfigure
 \includegraphics[width=5.5cm]{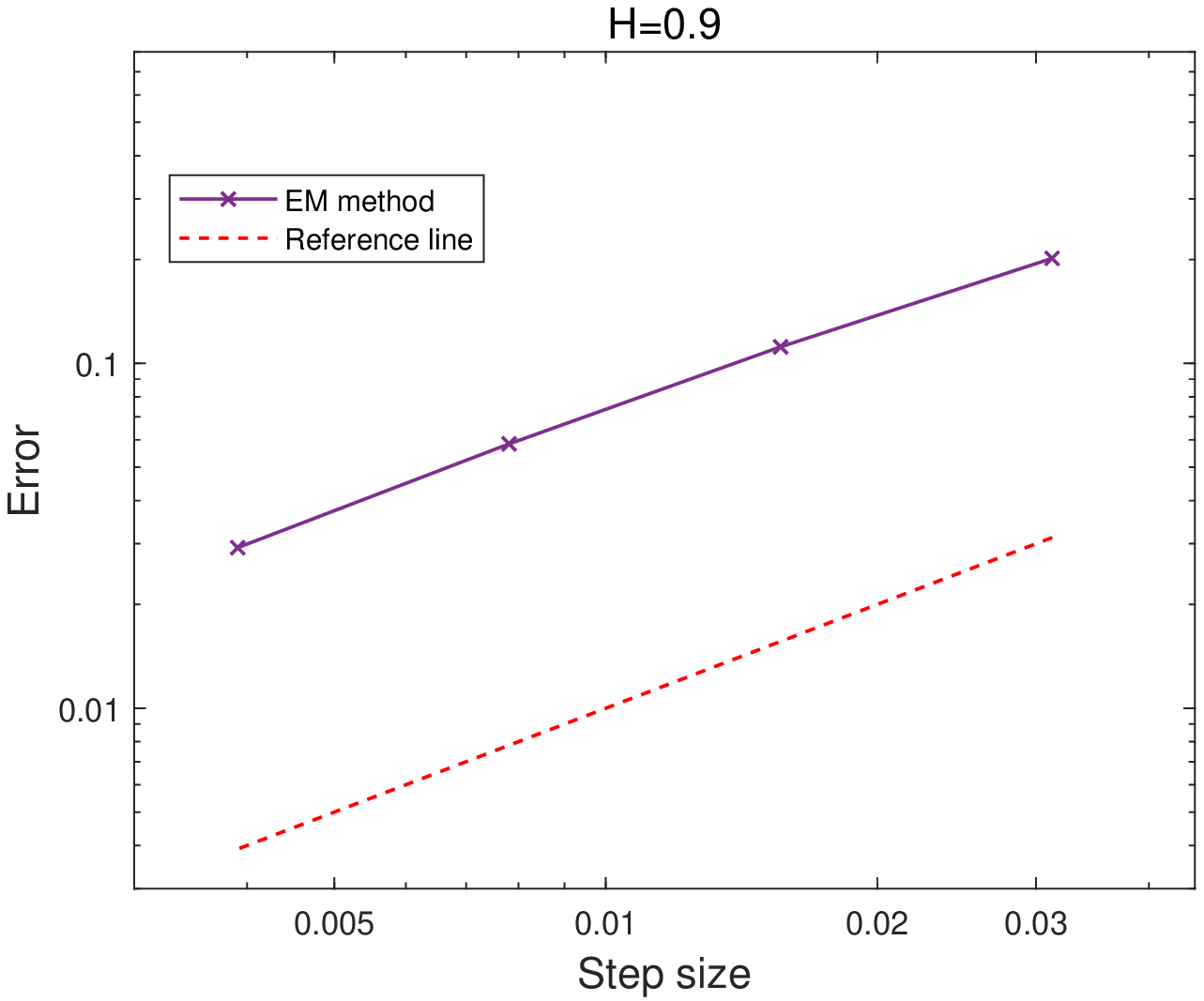}}
\caption{Error in final solution at time $T=1$ as the mean step size decreases for the truncated EM
method applied to  (\ref{example1})  with parameter set initial value $X_0 = 1$ and particle number $U=1000$.}\label{tu}
 \end{figure}

In Figure \ref{tu}, we draw the terminal mean square error at $T=1$ with four time step sizes ($\Delta=2^{-5}, 2^{-6}, 2^{-7}, 2^{-8}$) and use 100 sample paths to obtain the convergence order of $H=0.6$, $H=0.7$, $H=0.8$ and $H=0.9$ respectively. We use the numerical solution with smaller step size $2^{12}$ to replace the exact solution. For the four subfigures in Figure \ref{tu} , the red dotted reference line has a slope of 1, this observation is in line with our theoretical result.

\section*{Acknowledgements}
The corresponding  author is supported by the National Natural Science Foundation of China (No. 11871343). The work of W. Zhan is  supported by University Natural Science Research Project of Anhui (No. KJ2021A0107).

\appendix

\section{Proof of Theorem \ref{pc1}}\label{pcc1}

\begin{proof}
It follows from \eqref{McKeanN} and \eqref{McKeanI} that
\begin{equation*}
X_{t}^{i}-X_{t}^{i,N}=\int_{0}^{t}\left(b\left(X_{s}^{i}, \mathcal{L}_{X_{s}^{i}}\right)-b\left(X_{s}^{i,N}, \mu_{s}^{X,N}\right)\right) \mathrm{~d}s.
\end{equation*}
By \eqref{JJ1}, we obtain
\begin{equation*}
\begin{split}
\boldsymbol{E}\left(\left|X_{t}^{i}-X_{t}^{i,N}\right|^{2}\right)\leq& C_{T,L}\boldsymbol{E}\left(\int_{0}^{t}\left|X_{s}^{i}-X_{s}^{i,N}\right|^{2}\mathrm{~d}s\right)
+C_{T,L}\int_{0}^{t}\boldsymbol{E}\left(\mathcal{W}_{2}^{2}(\mathcal{L}_{X_{s}^{i}},\mu_{s}^{X,N} )\right)\mathrm{~d}s
\\\leq&C_{T,L}\int_{0}^{t}\boldsymbol{E}\left(\left|X_{s}^{i}
-X_{s}^{i,N}\right|^{2}\right)\mathrm{~d}s
+C_{T,L}\int_{0}^{t}\boldsymbol{E}\left(\mathcal{W}_{2}^{2}(\mathcal{L}_{X_{s}^{i}},\mu_{s}^{X} )\right)
\mathrm{~d}s.
\end{split}
\end{equation*}
We know that $\mathcal{W}_{2}^{2}(\mathcal{L}_{X_{s}^{i}},\mu_{s}^{X} )$ is controlled by the Wasserstein distance estimate  in \cite[Theorem 1]{fournier2015}. Therefore,
\begin{equation*}
\begin{split}
\boldsymbol{E}\left(\mathcal{W}_{2}^{2}(\mathcal{L}_{X_{s}^{i}},\mu_{s}^{X} )\right)\leq& CM^{2/q}_{q}(\mathcal{L}_{X_{s}^{i}})\\&
\times\left\{\begin{array}{ll}
N^{-1 / 2}+N^{-(q-2)/q}, & \text { if } d<4, \\
N^{-1 / 2} \log(1+N)+N^{-(q-2)/q}, & \text { if } d=4,\\
N^{-2 / d}+N^{-(q-2)/q}, & \text { if }d>4,
\end{array}\right.
\end{split}
\end{equation*}
where $$M_{q}(\mathcal{L}_{X_{s}^{i}})=\int_{\mathbb{R}^{d}}\left|X_{s}^{i}\right|^{q}\mathcal{L}_{X_{s}^{i}}(\mathrm{~d}X_{s}^{i}).$$
By Theorem \ref{Xbound1}, we note that $M_{q}(\mathcal{L}_{X_{s}^{i}})\leq C$.
Thus,
\begin{equation*}
\begin{split}
\boldsymbol{E}\left(\mathcal{W}_{2}^{2}(\mathcal{L}_{X_{s}^{i}},\mu_{s}^{X} )\right)\leq C
\left\{\begin{array}{ll}
N^{-1 / 2}+N^{-(q-2)/q}, & \text { if } d<4, \\
N^{-1 / 2} \log(1+N)+N^{-(q-2)/q}, & \text { if } d=4,\\
N^{-2 / d}+N^{-(q-2)/q}, & \text { if }d>4,
\end{array}\right.
\end{split}
\end{equation*}
Then, applying the Gr\"{o}nwall inequality completes the proof.\eproof
\end{proof}

\bibliographystyle{plain}
\bibliography{cite}

\end{document}